\theoremstyle{plain} 
    \newtheorem{theorem}{Theorem}[section]
    \newtheorem*{theorem*}{Theorem}
    \newtheorem{corollary}[theorem]{Corollary}
    \newtheorem{lemma}[theorem]{Lemma}
    \newtheorem{proposition}[theorem]{Proposition}
\theoremstyle{definition}
    \newtheorem{definition}[theorem]{Definition}
\theoremstyle{remark}
\theoremstyle{plain}
\newcommand{\range}{\operatorname{range}}
\newcommand{\satisfies}{\models}
\newcommand{\crit}{\operatorname{crit}}
\newcommand{\ZFC}{\mathrm{ZFC}}
\newcommand{\ZF}{\mathrm{ZF}}
\newcommand{\OR}{\mathrm{OR}}
\newcommand{\dbar}[1]{\bar{\bar #1}}
\newcommand{\cof}{\operatorname{cof}}
\newcommand{\AC}{\mathrm{AC}}
\setlist[enumerate]{label={\upshape(\roman*)},noitemsep}
\newcolumntype{C}{>{\centering\arraybackslash$}X<{$}}
\newcolumntype{M}{>{\centering\arraybackslash$}p{.35cm}<{$}}
\DeclareFontFamily{OMX}{MnSymbolE}{}
\DeclareSymbolFont{MnLargeSymbols}{OMX}{MnSymbolE}{m}{n}
\DeclareFontShape{OMX}{MnSymbolE}{m}{n}{
    <-6>  MnSymbolE5
   <6-7>  MnSymbolE6
   <7-8>  MnSymbolE7
   <8-9>  MnSymbolE8
   <9-10> MnSymbolE9
  <10-12> MnSymbolE10
  <12->   MnSymbolE12
}{}
\DeclareFontShape{OMX}{MnSymbolE}{b}{n}{
    <-6>  MnSymbolE-Bold5
   <6-7>  MnSymbolE-Bold6
   <7-8>  MnSymbolE-Bold7
   <8-9>  MnSymbolE-Bold8
   <9-10> MnSymbolE-Bold9
  <10-12> MnSymbolE-Bold10
  <12->   MnSymbolE-Bold12
}{}
\DeclareMathDelimiter{\ulcorner}
    {\mathopen}{MnLargeSymbols}{'036}{MnLargeSymbols}{'036}
\DeclareMathDelimiter{\urcorner}
    {\mathclose}{MnLargeSymbols}{'043}{MnLargeSymbols}{'043}
\title{Reinhardt Cardinals and Eventually Dominating Functions}
\author{Marwan Salam Mohammd
\\
    \texttt{marwan.mizuri@gmail.com}
}
\begin{document}

\maketitle

\begin{abstract}
    We prove a result concerning elementary embeddings of the set-theoretic universe into itself (Reinhardt embeddings) and functions on ordinals that ``eventually dominate'' such embeddings. We apply that result to show the existence of elementary embeddings satisfying some strict conditions and that are also reminiscent of extendibility in a more local setting. Building further on these concepts, we make precise the nature of some large cardinals whose existence under Reinhardt embeddings was proven by Gabriel Goldberg in his paper ``Measurable Cardinals and Choiceless Axioms.'' Finally, these ideas are used to present another proof of the Kunen inconsistency.
\end{abstract}







\section{Introduction}

Let us supplement the usual first-order language of set theory with a functional symbol $j,$ and let $\ZFC(j)$ be the collection of the following axioms:
\begin{enumerate}
    \item The usual axioms of $\ZFC.$
    \item Comprehension and Replacement for formulas in which $j$ appear.
    \item Axioms asserting that $j$ is a nontrivial elementary embedding of the set-theoretic universe $V$ into itself.
\end{enumerate}
The consideration of models of $\ZFC(j)$ was first proposed by William N. Reinhardt in his PhD dissertation in 1967 \cite{reinhardt}.
However, this theory was soon found to be inconsistent by Kenneth Kunen \cite{kunen}. Crucial to his proof was the use of the Axiom of Choice ($\AC$) and it is still unknown if an inconsistency exists without it.

The background theory for this paper will be $\ZFC(j)$ without $\AC,$ which we denote by $\ZF(j).$ We will use $\kappa$ to denote the \emph{Reinhardt cardinal} corresponding to $j,$ that is, the critical point of $j$ (or, in symbols, $\kappa=\crit j).$ In Section~\ref{sec:eventually_dominating_functions}, we prove a result that concerns $j$ and its relation to functions that ``eventually dominate'' $j$ on regular cardinals.

\begin{definition}\label{eventually_dominates}
    Given a limit ordinal $\delta$ and two functions $f,g:\delta\rightarrow\delta,$ we say that $g$ \emph{eventually dominates} $f,$ and write $f\leq^*g,$ iff there exists $\alpha<\delta$ such that $f(\beta)\leq g(\beta)$ for all $\beta\geq\alpha.$
\end{definition}

\newtheorem*{thm:no_dominating_function}{Theorem~\ref{no_dominating_function}}
\begin{thm:no_dominating_function}
    If $\delta>\kappa$ is a regular cardinal such that $j(\delta)=\delta,$ then there is no function $g:\delta\rightarrow\delta$ in the range of $j$ such that $j\vert_{\delta}\leq^* g.$
\end{thm:no_dominating_function}

For any elementary embedding $k:V_\delta\rightarrow V_\delta,$ where $\delta$ is a limit ordinal, we will consider elementary embeddings $l:V_\delta\rightarrow V_\delta$ that are ``roots'' of $k$ (Definition~\ref{def_application_operation}). Concerning this, in Section~\ref{sec:extendibility_behavior}, we prove the following result which shows the existence of an ordinal $\alpha$ that behaves somewhat like an extendible cardinal in $V_\delta.$ Let $\lambda>\kappa$ be the least ordinal such that $j(\lambda)=\lambda.$

\newtheorem*{thm:jumping_roots}{Theorem~\ref{jumping_roots}}
\begin{thm:jumping_roots}
    For all regular cardinals $\delta>\lambda$ such that $j(\delta)=\delta,$ there exists $\alpha<\delta$ such that, for every $\beta\in (\alpha,\delta),$ there is a root $k$ of $j\vert_{V_\delta}$ that satisfies $k(\alpha)>\beta.$
\end{thm:jumping_roots}

In the same section, we introduce the notion of ``$(j,\delta)$-smallness'' (Definition~\ref{def:small_sets}) and establish the following:

\newtheorem*{thm:no_small_sets}{Theorem~\ref{no_small_sets}}
\begin{thm:no_small_sets}
    There is no $(j,\delta)$-small set for any regular cardinal $\delta>\lambda.$
\end{thm:no_small_sets}

Goldberg showed that the existence of a Reinhardt cardinal implies the existence of a proper class of cardinals that are \emph{almost} supercompact \cite{goldberg_measurable_choiceless}. In the same paper, he shows that if $\eta$ is almost supercompact then either $\eta$ or $\eta^+$ is regular.\footnote{Successor cardinals in $\ZF$ need not be regular. See the discussion at the beginning of Section~\ref{sec:regular_cardinals} for more details.} We improve slightly on this result by proving the following in Section~\ref{sec:regular_cardinals}:

\newtheorem*{thm:thm_proper_is_not_regular}{Theorem~\ref{thm_proper_is_not_regular}}
\begin{thm:thm_proper_is_not_regular}
    If $\eta>\lambda$ is an almost supercompact cardinal that is not a limit of almost supercompact cardinals, then it is not regular.
\end{thm:thm_proper_is_not_regular}

Hence, in such cases, it is always $\eta^+$ that is a regular cardinal. Finally, in the last section, we present an alternative proof of the Kunen inconsistency by proving that there are $(j,\delta)$-small sets under $\AC.$

\newtheorem*{thm:small_sets_under_ac}{Theorem~\ref{thm:small_sets_under_ac}}
\begin{thm:small_sets_under_ac}[$\AC$]
    There exists an ordinal $\theta$ such that for every singular almost supercompact $\eta>\theta$ such that $\cof(\eta)=\omega$ and $j(\eta)=\eta,$ there exists a $(j,\eta^+)$-small set.
\end{thm:small_sets_under_ac}

\newtheorem*{cor:kunen_inconsistency}{Corollary~\ref{cor:kunen_inconsistency}}
\begin{cor:kunen_inconsistency}[The Kunen Inconsistency]
    The theory $\ZFC(j)$ is inconsistent.
\end{cor:kunen_inconsistency}

\section{Eventually Dominating Functions}\label{sec:eventually_dominating_functions}

In this short section we just prove Theorem~\ref{no_dominating_function}. The following lemma is easy.

\begin{lemma}\label{lemma_club_not_all_fixed}
    For any cardinal $\delta$ of cofinality strictly greater than $\kappa$ and any club $C\subset \delta$ with increasing enumeration $\langle \alpha_\xi\mid\xi<\cof(\delta)\rangle,$ if $j(\alpha_\xi)=\alpha_\xi$ for all $\xi<\kappa,$ then $j(\alpha_{\kappa})> \alpha_\kappa.$
\end{lemma}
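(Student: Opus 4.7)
The plan is to work with the increasing enumeration $e\colon\cof(\delta)\to C$ defined by $e(\xi)=\alpha_\xi$, and to exploit the fact that $j(e)$ is then the increasing enumeration of the club $j(C)\subset j(\delta)$. Since $\cof(\delta)>\kappa$, the index $\kappa$ lies in the domain of $e$, so $\alpha_\kappa$ is defined; and since $C$ is closed and $\kappa$ is a limit ordinal, $\alpha_\kappa=\sup_{\xi<\kappa}\alpha_\xi$.

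The first computation is to determine $j(e)(\xi)$ for $\xi<\kappa$. Because $\xi<\kappa=\crit j$ we have $j(\xi)=\xi$, hence
\[
j(e)(\xi)=j(e)(j(\xi))=j(e(\xi))=j(\alpha_\xi)=\alpha_\xi,
\]
where the last equality uses the hypothesis. Thus the enumeration $j(e)$ of $j(C)$ agrees with $e$ on all indices below $\kappa$.

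Now I would evaluate $j(e)$ at $\kappa$ and at $j(\kappa)$. Since $j(e)$ is strictly increasing, $j(e)(\kappa)>j(e)(\xi)=\alpha_\xi$ for every $\xi<\kappa$, so
\[
j(e)(\kappa)\;\geq\;\sup_{\xi<\kappa}\alpha_\xi\;=\;\alpha_\kappa.
\]
On the other hand, $\kappa<j(\kappa)$, so by strict monotonicity of $j(e)$,
\[
j(\alpha_\kappa)=j(e(\kappa))=j(e)(j(\kappa))\;>\;j(e)(\kappa)\;\geq\;\alpha_\kappa,
\]
which is the desired strict inequality.

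The main thing to be careful about is the bookkeeping that turns the elementarity statement ``$j(e)$ enumerates $j(C)$'' into the concrete identities $j(e)(\xi)=\alpha_\xi$ for $\xi<\kappa$ and $j(e)(j(\kappa))=j(\alpha_\kappa)$; once those are in hand the proof is essentially a one-line inequality. The cofinality hypothesis $\cof(\delta)>\kappa$ plays two roles, ensuring both that $\kappa<\cof(\delta)$ so that $\alpha_\kappa$ exists, and that $\alpha_\kappa$ is genuinely a limit of the earlier $\alpha_\xi$'s via closure of $C$.
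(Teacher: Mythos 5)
Your proof is correct and uses essentially the same idea as the paper: apply $j$ to the enumeration of $C$, observe that it agrees with the original below $\kappa$, and compare the $\kappa$-th and $j(\kappa)$-th entries. The only cosmetic difference is that you argue directly via $j(e)(\kappa)\geq\alpha_\kappa$ from strict monotonicity, whereas the paper assumes $j(\alpha_\kappa)=\alpha_\kappa$ and derives a contradiction using closure of $j(C)$; both are fine.
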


\begin{proof}
    Suppose towards a contradiction that $j(\alpha_\kappa)=\alpha_\kappa.$ Consider $j(\langle \alpha_\xi\mid \xi <\kappa+1\rangle) = \langle \beta_\xi\mid \xi <j(\kappa)+1\rangle.$ By elementarity, $\beta_\xi = j(\alpha_\xi)=\alpha_\xi,$ for all $\xi<\kappa.$ By closure of $j(C),$ we know that $\beta_\kappa=\sup\{\beta_\xi\mid \xi<\kappa\} = \sup\{\alpha_\xi\mid \xi<\kappa\} = \alpha_\kappa.$ So, $j(\alpha_\kappa) = \alpha_\kappa=\beta_\kappa.$ But, $j(\alpha_\kappa) = \beta_{j(\kappa)}>\beta_\kappa.$
\end{proof}

\begin{theorem}\label{no_dominating_function}
    If $\delta>\kappa$ is a regular cardinal such that $j(\delta)=\delta,$ then there is no function $g:\delta\rightarrow\delta$ in the range of $j$ such that $j\vert_{\delta}\leq^* g.$
\end{theorem}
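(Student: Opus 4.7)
Since $g \in \range(j)$, I can write $g = j(f)$ for some $f$; by elementarity applied to the statement ``$g$ is a function from $\delta$ to $\delta$'' (using $j(\delta)=\delta$), the preimage $f$ is itself a function $\delta \to \delta$. Let $\alpha < \delta$ be a witness that $j\vert_\delta \leq^* g$, so $j(\beta) \leq g(\beta)$ for every $\beta \in [\alpha,\delta)$. The plan is to exhibit a club $C \subseteq \delta$ all of whose members above $\alpha$ are fixed by $j$, and then apply Lemma~\ref{lemma_club_not_all_fixed} to derive a contradiction.

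The natural candidate is the club of closure points of $f$, namely
\[
    C = \{\beta < \delta : f(\gamma) < \beta \text{ for all } \gamma < \beta\}.
\]
Since $\delta$ is regular and $f \colon \delta \to \delta$, this is a club in $\delta$, and by elementarity
\[
    j(C) = \{\beta < \delta : g(\gamma) < \beta \text{ for all } \gamma < \beta\}.
\]
Now suppose $\beta \in C$ with $\beta \geq \alpha$. Then $j(\beta) \in j(C)$, so $g(\gamma) < j(\beta)$ for every $\gamma < j(\beta)$. If we had $j(\beta) > \beta$, we could take $\gamma = \beta$ and conclude $g(\beta) < j(\beta)$; but the domination hypothesis forces $j(\beta) \leq g(\beta)$, a contradiction. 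Since $j(\beta) \geq \beta$ holds for any ordinal $\beta$, we must have $j(\beta) = \beta$.

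To finish, replace $C$ by $C \setminus (\alpha + 1)$, which is still a club in $\delta$. By the previous paragraph, every element of this truncated club is fixed by $j$. Since $\delta$ is regular and $\delta > \kappa$, we have $\cof(\delta) = \delta > \kappa$, so Lemma~\ref{lemma_club_not_all_fixed} applies: enumerating the truncated club as $\langle \alpha_\xi : \xi < \delta\rangle$, the first $\kappa$ members are fixed by $j$, hence $j(\alpha_\kappa) > \alpha_\kappa$. But $\alpha_\kappa$ is itself a member of the truncated club and hence fixed by $j$, a contradiction.

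The substantive step is identifying the right club and unpacking elementarity to reach the clash between the two inequalities at $\gamma = \beta$; the other ingredients (regularity giving clubness of $C$, and the $\kappa$-fixed-points pigeonhole from Lemma~\ref{lemma_club_not_all_fixed}) are standard once the closure-point club is in hand.
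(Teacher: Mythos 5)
Your proof is correct, but it takes a genuinely different (and shorter) route than the paper's. The paper builds a normal sequence $\langle x_\xi\mid\xi<\delta\rangle$ that records values $f(\beta)$ at points where $f(\beta)>\beta$, intersects the clubs of fixed points of $x$, of $j(x)$, and of $j\vert_\delta$, and then derives the contradiction from a somewhat delicate comparison of $y_{c_\kappa+1}$ with $y_{j(c_\kappa)}$ after invoking Lemma~\ref{lemma_club_not_all_fixed}. You instead take the club $C$ of closure points of $f$ and observe that any $\beta\in C$ with $\beta\geq\alpha$ must be fixed by $j$: if $j(\beta)>\beta$, then $j(\beta)\in j(C)$ gives $g(\beta)<j(\beta)$, while domination gives $j(\beta)\leq g(\beta)$ --- an immediate clash. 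This hands Lemma~\ref{lemma_club_not_all_fixed} an entire club of fixed points, which is absurd since the lemma forces $j(\alpha_\kappa)>\alpha_\kappa$. All the steps check out: $C$ is a club because $\delta$ is regular and uncountable; $f:\delta\to\delta$ follows from elementarity and $j(\delta)=\delta$; truncating $C$ above $\alpha$ preserves clubness; and $\cof(\delta)=\delta>\kappa$ puts you squarely in the lemma's hypotheses. Your version avoids the normal-sequence bookkeeping and the three-club intersection entirely, at no loss of generality; both arguments ultimately rest on Lemma~\ref{lemma_club_not_all_fixed} as the engine that produces a moved point inside a club of would-be fixed points.
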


\begin{proof}
Work towards a contradiction and let $j(f)=g$ be such that $j\vert_{\delta}\leq^* g.$ Fix $\alpha<\delta$ such that $j\vert_{\delta}(\beta)\leq g(\beta)$ for all $\beta\geq\alpha.$ Define the sequence $x=\langle x_\xi\mid \xi<\delta \rangle$ from $f$ and $\alpha$ by setting $x_0=\alpha,$ taking limits at limit stages, and at successor stages taking $x_{\xi+1} = f(\beta)$ where $\beta\geq x_{\xi}$ is the least such that $f(\beta)> \beta.$ We need to make sure that this is well defined. The regularity of $\delta$ ensures success at limit stages. For the successor stages we need to check that it is always possible to find a $\beta$ arbitrarily high below $\delta$ such that $f(\beta)>\beta.$ By elementarity of $j,$ we can do this by checking the same for $g,$ and since $j\vert_{\delta}\leq^* g,$ this can be accomplished by making sure that $j\vert_{\delta}$ satisfies that condition. But $j\vert_{\delta}$ clearly satisfies that condition: There are arbitrarily high $\gamma<\delta$ such that $j(\gamma)=\gamma,$ and for any such $\gamma$ we have $j(\gamma+\kappa)=\gamma+j(\kappa)>\gamma+\kappa.$

The sequence $\langle x_\xi\mid \xi<\delta \rangle$ is normal, so it must have unboundedly many fixed points. Let us now consider $x$ and $j(x)=y=\langle y_\xi\mid \xi<\delta\rangle.$ Let $C_x$ and $C_y$ denote the sets of fixed points of $x$ and $y,$ respectively, and let $C_{j\vert_{\delta}}$ denote the set of fixed points of $j\vert_{\delta}.$ As $C_x$ and $C_y$ are clubs and $C_{j\vert_{\delta}}$ is a $<\!\kappa$-club, their intersection $C_x\cap C_y\cap C_{j\vert_{\delta}}$ must also be a $<\!\kappa$-club. The closure of this intersection is a club, which we denote by $C,$ and let $\langle c_\xi\mid \xi<\delta\rangle$ be its increasing enumeration. 

By Lemma~\ref{lemma_club_not_all_fixed}, $j(c_\kappa)>c_\kappa.$ We also have $c_\kappa = x_{c_\kappa} = y_{c_\kappa},$ because $C\subset C_x,C_y.$ Additionally, by applying $j$ to $x_{c_\kappa}=c_\kappa,$ we get $y_{j(c_\kappa)} = j(c_\kappa).$ By definition of $x$ and elementarity of $j,$ $y_{c_\kappa+1} = g(\beta)$ where $\beta\geq y_{c_\kappa}$ is the least such that $g(\beta)>\beta.$ The least such $\beta$ is $y_{c_\kappa},$ because $g(y_{c_\kappa}) \geq j(y_{c_\kappa}) = j(c_\kappa) > c_\kappa = y_{c_\kappa}.$ We now have $y_{c_\kappa+1} = g(y_{c_\kappa}) \geq j(c_\kappa) = j(x_{c_\kappa}) = y_{j(c_\kappa)}.$ But, $j(c_\kappa)>c_\kappa+1$ implies $y_{j(c_\kappa)}>y_{c_\kappa+1},$ a contradiction.
\end{proof}

\section{Extendibility Behavior}\label{sec:extendibility_behavior}

Given a nontrivial elementary embedding $k: V_\delta \rightarrow V_\delta$ (allowing for $\delta=\OR$ here), the \emph{critical sequence} $\langle\kappa_n(k)\mid n\in\omega\rangle$ of $k$ is defined recursively by setting $\kappa_0(k)=\crit k$ and $\kappa_{n+1}(k)= k(\kappa_n(k)).$ The supremum of this sequence will be denoted by $\lambda(k).$ Notice that $\lambda(k)$ is the first fixed point of $k$ above its critical point. We will simplify notation by letting $\kappa_n=\kappa_n(j)$ and $\lambda=\lambda(j).$ 

For every ordinal $\delta,$ let $\mathcal{E}_\delta$ denote the set of all nontrivial elementary embeddings $k:V_\delta\rightarrow V_\delta.$ It is an easy argument to show that $\mathcal{E}_\delta$ is nonempty for all $\delta\geq\lambda:$ If not true, take the least $\delta_0$ counterexample and notice that $j\vert_{V_{\delta_0}}\in \mathcal{E}_{\delta_0}.$

\begin{definition}\label{def_application_operation}
For $\delta$ a limit ordinal and $k,l\in \mathcal{E}_\delta,$ define the operation $k[l],$ the \emph{application of $k$ to $l,$} by setting $k[l] = \bigcup_{\alpha<\delta}k(l\vert_{V_\alpha}).$
\end{definition}

\begin{lemma}\label{application_on_range}
    For $k,l\in \mathcal{E}_\delta$ where $\delta$ is a limit ordinal, $k[l](k(a)) = k(l(a))$ for all $a\in V_\delta.$
\end{lemma}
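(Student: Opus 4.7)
The plan is to reduce the identity to a single instance of elementarity of $k$ applied to the \emph{set} $l\vert_{V_\alpha}$ for appropriate $\alpha$. The subtlety to keep in mind is that $l$ itself is a class function on $V_\delta$, not a set in $V_\delta$, so $k$ cannot be applied to $l$ directly; we have to replace $l$ by one of its initial restrictions, and then take a union.

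First I would fix $a \in V_\delta$ and choose some $\alpha < \delta$ with $a \in V_\alpha$, which is possible because $\delta$ is a limit ordinal. Since $l : V_\delta \rightarrow V_\delta$, the ordinal $l(\alpha)$ lies below $\delta$, so $l\vert_{V_\alpha}$ is a subset of $V_\alpha \times V_{l(\alpha)}$ and therefore a set in $V_\delta$. Setting $b := l(a) \in V_\delta$, the identity $(l\vert_{V_\alpha})(a) = b$ is a first-order assertion about the three sets $l\vert_{V_\alpha}, a, b \in V_\delta$, and so applying $k$ elementarily yields $k(l\vert_{V_\alpha})(k(a)) = k(b) = k(l(a))$.

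Second I would verify that $k[l]$ really is a function at $k(a)$ and that it agrees there with $k(l\vert_{V_\alpha})$. Whenever $\alpha \leq \beta < \delta$ we have $l\vert_{V_\alpha} \subseteq l\vert_{V_\beta}$, and elementarity of $k$ preserves this inclusion, so $\{k(l\vert_{V_\alpha}) \mid \alpha < \delta\}$ is a chain of functions with respect to $\subseteq$, and its union $k[l]$ is therefore a well-defined function. Because $k(a)$ belongs to $V_{k(\alpha)} = \domain{k(l\vert_{V_\alpha})}$, the value $k[l](k(a))$ is computed by $k(l\vert_{V_\alpha})$, and combining this with the previous computation gives $k[l](k(a)) = k(l(a))$.

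I do not anticipate any real obstacle here; the only conceptually delicate point, and in that sense the ``main'' one, is to keep straight the distinction between $l$ (a class on $V_\delta$) and $l\vert_{V_\alpha}$ (a set in $V_\delta$) when invoking elementarity of $k$. Once that replacement is made, the argument is essentially bookkeeping about restrictions.
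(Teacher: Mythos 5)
Your proposal is correct and follows essentially the same route as the paper's proof: fix $\alpha<\delta$ with $a\in V_\alpha$ and compute $k[l](k(a))=k(l\vert_{V_\alpha})(k(a))=k(l\vert_{V_\alpha}(a))=k(l(a))$ by elementarity applied to the set $l\vert_{V_\alpha}$. Your additional verification that $k[l]$ is a well-defined function (the union of a $\subseteq$-chain) is a detail the paper defers to Lemma~\ref{prop_application_op_and_critical_point}, but the core argument is identical.
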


\begin{proof}
    Fix some $\alpha<\delta$ such that $a\in V_\alpha.$ Then, $k[l](k(a)) = k(l\vert_{V_\alpha})(k(a)) = k(l\vert_{V_\alpha}(a)) = k(l(a)).$
\end{proof}

The following lemma is similar to \cite[Lemma~1.6]{dehornoy}.

\begin{lemma}\label{prop_application_op_and_critical_point}
If $k,l\in \mathcal{E}_\delta$ where $\delta$ is a limit ordinal, then $k[l]$ is also in $\mathcal{E}_\delta.$ Moreover, $\crit k[l] = k(\crit{l}),$ and if $\langle\gamma_n\mid n\in \omega\rangle$ is the critical sequence of $l,$ then $\langle k(\gamma_n)\mid n\in\omega\rangle$ is the critical sequence of $k[l].$
\end{lemma}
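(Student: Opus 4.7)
The plan is to proceed in three steps: show $k[l]$ is a well-defined function $V_\delta \to V_\delta$, show it is elementary, and then compute its critical point and critical sequence. The last step is essentially mechanical; the first two require some care.

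For well-definedness, I would check coherence of the pieces $k(l\vert_{V_\alpha})$: for $\alpha < \beta < \delta$, the $V_\delta$-internal identity $l\vert_{V_\alpha} = (l\vert_{V_\beta})\vert_{V_\alpha}$ yields, by elementarity of $k$, the identity $k(l\vert_{V_\alpha}) = k(l\vert_{V_\beta})\vert_{V_{k(\alpha)}}$. Hence the union $k[l] = \bigcup_{\alpha<\delta} k(l\vert_{V_\alpha})$ is a function whose domain is $\bigcup_{\alpha<\delta} V_{k(\alpha)} = V_\delta$, since $k$ being a nontrivial elementary self-map of $V_\delta$ forces $k$ to be cofinal in $\delta$.

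For elementarity, I would use reflection inside $V_\delta$. Fix a formula $\phi$ and parameters $\vec{a} \in V_\delta$; by internal reflection, the collection of $\alpha < \delta$ for which $V_\delta$ satisfies ``$V_\alpha$ is $\phi$-elementary in $V$'' is a club, so pick such an $\alpha$ with $\vec{a} \in V_\alpha$. Combining this reflection with the elementarity of $l$, $V_\delta$ sees that $l\vert_{V_\alpha}: V_\alpha \to V_{l(\alpha)}$ is $\phi$-elementary. Transporting this statement through $k$, $V_\delta$ sees that $k(l\vert_{V_\alpha}): V_{k(\alpha)} \to V_{k(l(\alpha))}$ is $\phi$-elementary, and applying $k$'s elementarity to the reflection property at $\alpha$ and at $l(\alpha)$ also gives that $V_{k(\alpha)}$ and $V_{k(l(\alpha))}$ are $\phi$-elementary in $V_\delta$. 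Chaining these equivalences yields $V_\delta \models \phi(\vec{a}) \iff V_\delta \models \phi(k[l](\vec{a}))$.

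For the critical point, observe that whenever $\alpha > \crit l$ the map $l\vert_{V_\alpha}$ has critical point exactly $\crit l$, so by elementarity of $k$ the map $k(l\vert_{V_\alpha})$ has critical point $k(\crit l)$. For any $\gamma < k(\crit l)$, choosing such an $\alpha$ large enough that $\gamma \in V_{k(\alpha)}$ gives $k[l](\gamma) = k(l\vert_{V_\alpha})(\gamma) = \gamma$, so $\crit k[l] \geq k(\crit l)$. Lemma~\ref{application_on_range} supplies the reverse inequality via $k[l](k(\crit l)) = k(l(\crit l)) > k(\crit l)$. The critical sequence claim then follows by induction: $\kappa_0(k[l]) = k(\gamma_0)$, and $\kappa_{n+1}(k[l]) = k[l](k(\gamma_n)) = k(l(\gamma_n)) = k(\gamma_{n+1})$ using Lemma~\ref{application_on_range} at the successor step. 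I expect the main obstacle to be the elementarity step, where one must confirm that sufficient internal reflection holds in $V_\delta$ and that the $\phi$-elementarity being transported by $k$ is genuinely a first-order property of the sets $l\vert_{V_\alpha}$, $V_\alpha$, $V_{l(\alpha)}$ inside $V_\delta$.
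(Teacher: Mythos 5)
Your well-definedness step, the computation of the critical point, and the inductive computation of the critical sequence all match the paper's proof and are fine (satisfaction in the set structures $V_\alpha$, $V_{l(\alpha)}$ is definable over $V_\delta$, so the ``$\phi$-elementarity'' and ``least ordinal moved'' properties you transport through $k$ are indeed first-order, which was one of your two worries). The genuine problem is the other worry: the reflection claim in the elementarity step. You assert that the set of $\alpha<\delta$ for which $V_\alpha$ is $\phi$-elementary in $V_\delta$ is a club, but $\delta$ is only assumed to be a limit ordinal, so $V_\delta$ need not satisfy Replacement and the L\'evy reflection theorem is not available internally. The external version does not obviously go through either: unboundedness of $\{\alpha<\delta : V_\alpha\prec_\phi V_\delta\}$ requires that the ``rank of the least witness'' function on each $V_\alpha$ have range bounded below $\delta$, which can fail when $\cof(\delta)$ is small, and in $\ZF$ is not even guaranteed by regularity of $\delta$, since $V_\alpha$ need not be well-orderable. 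So as written this step has a real gap.

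The fix is to drop reflection entirely, which is what the paper does. For a fixed metatheoretic formula $\phi$, the relevant instance of the elementarity of $l$, namely $\forall x\in V_\alpha\,\bigl(\phi(x)\leftrightarrow\phi(l\vert_{V_\alpha}(x))\bigr)$, is already a single first-order statement about $V_\delta$ with parameters $V_\alpha$ and $l\vert_{V_\alpha}$; there is no need to internalize ``$V_\delta\models\phi$'' because $\phi$ itself is fixed and is simply evaluated in $V_\delta$. Applying the elementarity of $k$ to that one statement gives $\forall x\in V_{k(\alpha)}\,\bigl(\phi(x)\leftrightarrow\phi(k(l\vert_{V_\alpha})(x))\bigr)$, and since $k(\alpha)\geq\alpha$ for every ordinal $\alpha<\delta$, taking the union over $\alpha$ covers all of $V_\delta$. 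This argument needs nothing about $\delta$ beyond its being a limit ordinal, and replaces your entire reflection detour.
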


\begin{proof}
First note that, for any $\alpha_1,\alpha_2<\delta,$ the fact that the two functions $l\vert_{V_{\alpha_1}}$ and $l\vert_{V_{\alpha_2}}$ are compatible implies that $k(l\vert_{V_{\alpha_1}})$ and $k(l\vert_{V_{\alpha_2}})$ are compatible. Therefore, $k[l]$ is a function with domain and codomain $V_\delta.$ Also, it is injective since it is the union of a $\subset$-chain of injections.

To see that it is elementary, fix a formula $\phi(x)$ and an ordinal $\alpha<\delta.$ By elementarity of $l,$ we have
$$\forall x\in V_\alpha (V_\delta\satisfies \phi(x) \iff V_\delta\satisfies \phi(l\vert_{V_\alpha}(x))).$$
Applying $k$ to the above formula gives
$$\forall x\in V_{k(\alpha)} (V_\delta\satisfies \phi(x) \iff V_\delta \satisfies \phi(k(l\vert_{V_\alpha})(x))).$$ Since $\alpha$ was arbitrary, the above must be correct for all $x$ in $V_\delta.$

The fact that $\crit{k[l]} = k(\crit{l})$ follows from two facts: $k[l](k(\crit{l}))>k(\crit{l}),$ which follows from $l(\crit{l})>\crit{l},$ and $\forall \alpha < k(\crit{l}) (k[l](\alpha) = \alpha),$ which follows from $\forall \alpha < \crit{l}(l(\alpha)= \alpha).$
For the final claim of the lemma:
\begin{multline*}
    k[l]^{n}(\crit k[l]) = k[l]^{n}(k(\crit{l})) = k[l]^{n-1}(k[l](k(\crit{l})))
    \\ = k[l]^{n-1}(k(l (\crit l))) = k(l^n(\crit l)),
\end{multline*}
by $n$ applications of Lemma~\ref{application_on_range}.
\end{proof}

Henceforward, we will always assume that $\delta$ is a limit ordinal.

\begin{definition}\label{def_iterations_and_roots}
    For any $k\in \mathcal{E}_\delta,$ we can define the two sets $I(k)=\{k_n\mid n\geq1\},$ where $k_1=k$ and $k_{n+1}=k_n[k_n],$ and $R(k) = \{l\in \mathcal{E}_\delta\mid l[l]=k\}.$ Whenever $l[l]=k,$ we will call $l$ a \emph{root} of $k$ and $k$ \emph{the square} of $l.$
\end{definition}

\begin{definition}
    For $k\in \mathcal{E}_\delta$ define the set $A(k)$ by the following recursion: Set $A_0=I(k)$ and $A_{n+1}= A_n\cup \bigcup_{l\in A_n} R(l),$ and let $A(k)=\bigcup_n A_n.$
\end{definition}

Notice that the set $A(k)$ is the smallest set containing $k$ and closed under taking squares and roots.

\begin{lemma}\label{prop_fixing_A}
    If $\delta>\kappa$ is a limit ordinal such that $j(\delta)=\delta,$ then $j(A(j\vert_{V_\delta}))= A(j\vert_{V_\delta}).$
\end{lemma}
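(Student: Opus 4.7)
The plan is to reduce the identity, via elementarity of $j$ applied to the definable operation $A(\cdot)$, to showing $A(j(k)) = A(k)$, where $k := j\vert_{V_\delta}$. The main technical step is to prove the identity $j(k) = k[k]$. Once that is in hand, $A(j(k)) = A(k)$ follows at once from the closure properties of $A(\cdot)$: the square $k[k]$ lies in $A(k)$ by closure under squaring, so $A(k[k]) \subseteq A(k)$ by minimality; conversely, $k$ is a root of $k[k]$, so $k \in R(k[k]) \subseteq A(k[k])$, yielding the reverse inclusion.

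So the real work is verifying $j(k) = k[k]$. By Definition~\ref{def_application_operation}, $k[k] = \bigcup_{\alpha < \delta} k(k\vert_{V_\alpha})$. For each $\alpha < \delta$, elementarity gives $j(\alpha) < j(\delta) = \delta$, so the restriction $k\vert_{V_\alpha} = j\vert_{V_\alpha}$ has rank bounded below $\delta$ and lies in $V_\delta$. Hence $k(k\vert_{V_\alpha}) = j(k\vert_{V_\alpha})$, and another application of elementarity identifies this with $j(k)\vert_{V_{j(\alpha)}}$. Therefore $k[k] = \bigcup_{\alpha < \delta} j(k)\vert_{V_{j(\alpha)}}$.

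To glue these pieces together into all of $j(k)$, I need $\{j(\alpha) : \alpha < \delta\}$ to be cofinal in $\delta$; but if $\beta < \delta$ were an upper bound, then the chain of inequalities $\alpha \leq j(\alpha) < \beta$ for all $\alpha < \delta$ would force $\delta \leq \beta$, a contradiction. Thus $\bigcup_{\alpha<\delta} V_{j(\alpha)} = V_\delta$, giving $k[k] = j(k)\vert_{V_\delta} = j(k)$ and completing the reduction. The only subtlety, and the sole point where the hypothesis $j(\delta) = \delta$ is really exploited beyond ensuring that $k \in \mathcal{E}_\delta$, is this cofinality step; because $\delta$ is permitted to be an arbitrary limit ordinal rather than a regular cardinal, one cannot quote the usual cofinality argument for regular fixed points and must run the direct one above.
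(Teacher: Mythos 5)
Your proof is correct and follows essentially the same route as the paper's: reduce via elementarity to the key identity $j(j\vert_{V_\delta}) = j\vert_{V_\delta}[j\vert_{V_\delta}]$, then conclude $A(k[k])=A(k)$ from the closure of $A(\cdot)$ under squares and roots together with minimality. Your treatment is in fact slightly more careful than the paper's at the gluing step, where you justify that $j[\delta]$ is cofinal in $\delta$ (a detail the paper's union computation passes over silently).
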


\begin{proof}
    Denote $j\vert_{V_\delta}$ by $j'$ for simplicity.
    First, by elementarity of $j,$ we have $j(A(j'))=A(j(j')).$ Then, noticing that $j'= \bigcup_{\alpha<\delta}j'\vert_{V_\alpha},$ we get
    $$
    j(j') = j\Big(\bigcup_{\alpha<\delta}j'\vert_{V_\alpha}\Big) = \bigcup_{\alpha<\delta}j(j'\vert_{V_\alpha}) = \bigcup_{\alpha<\delta}j'(j'\vert_{V_\alpha}) = j'[j'],
    $$
    hence $A(j(j'))=A(j'[j']).$ Finally, we establish $A(j'[j'])=A(j'):$ $I(j'[j'])\subset I(j')$ implies $A(j'[j'])\subset A(j')$ by definition. For the reverse inclusion, notice that $j'\in R(j'[j'])\subset A(j'[j']),$ and since $A(j')$ is the smallest set containing $j'$ and closed under taking squares and roots, it must be that $A(j')\subset A(j'[j']).$ Putting everything together, we have
    \[
    j(A(j'))= A(j(j'))=A(j'[j'])=A(j').\qedhere
    \]
\end{proof}

\begin{theorem}\label{jumping_embeddings}
    For all regular cardinals $\delta>\lambda$ such that $j(\delta)=\delta,$ there exists $\alpha<\delta$ such that, for every $\beta\in (\alpha,\delta),$ there is a $k\in A(j\vert_{V_\delta})$ satisfying $k(\alpha)>\beta.$
\end{theorem}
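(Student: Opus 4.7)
The plan is to prove the theorem by contradiction, using Theorem~\ref{no_dominating_function} as a black box: the failure of the conclusion will assemble a function in the range of $j$ that pointwise dominates $j|_\delta.$

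Suppose no such $\alpha$ exists, i.e., for every $\alpha<\delta$ there is some $\beta<\delta$ with $k(\alpha)\leq\beta$ for every $k\in A(j|_{V_\delta}).$ Equivalently, $\sup\{k(\alpha)\mid k\in A(j|_{V_\delta})\}<\delta$ for every $\alpha<\delta.$ (Each $k\in A(j|_{V_\delta})$ lies in $\mathcal{E}_\delta,$ so sends ordinals below $\delta$ to ordinals below $\delta,$ and this supremum is a set of ordinals because $A(j|_{V_\delta})$ is a set.) I would then define $g:\delta\rightarrow\delta$ by
$$g(\alpha)=\sup\{k(\alpha)\mid k\in A(j|_{V_\delta})\}.$$

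Next I would verify two properties of $g.$ First, $g$ is in the range of $j.$ The function $g$ is definable from the parameters $A(j|_{V_\delta})$ and $\delta,$ both of which are fixed by $j$ (the former by Lemma~\ref{prop_fixing_A}, the latter by hypothesis). Elementarity then gives $j(g)=g,$ so $g$ is its own preimage. Second, $g$ pointwise dominates $j|_\delta$: since $j|_{V_\delta}=(j|_{V_\delta})_1\in I(j|_{V_\delta})=A_0\subseteq A(j|_{V_\delta}),$ we directly obtain $g(\alpha)\geq j(\alpha)$ for every $\alpha<\delta,$ and in particular $j|_\delta\leq^* g.$ Theorem~\ref{no_dominating_function} then delivers the contradiction.

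I do not anticipate a genuine obstacle, since all the real work has already been done. The key conceptual observation is that the negation of the desired conclusion is precisely a boundedness statement on the pointwise action of $A(j|_{V_\delta}),$ and that Lemma~\ref{prop_fixing_A} is exactly what is needed to ensure the bounding function is fixed by $j$ and hence eligible as witness in Theorem~\ref{no_dominating_function}. The rest is bookkeeping about the definitions of $I(k)$ and $A(k).$
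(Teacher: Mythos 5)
Your proposal is correct and is essentially identical to the paper's proof: both negate the conclusion to get the bounding function $\xi\mapsto\sup\{k(\xi)\mid k\in A(j\vert_{V_\delta})\},$ use Lemma~\ref{prop_fixing_A} to see it is fixed by $j$ (hence in $\range j$), and note it dominates $j\vert_\delta$ since $j\vert_{V_\delta}\in A(j\vert_{V_\delta}),$ contradicting Theorem~\ref{no_dominating_function}.
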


\begin{proof}
    Let $A$ denote $A(j\vert_{V_\delta})$ for simplicity. Working towards a contradiction, fix any such $\delta$ and suppose there is no such $\alpha<\delta.$ Define $f:\delta\rightarrow\OR$ by setting $f(\xi) = \sup\{k(\xi)\mid k\in A\}.$ By assumption, $f(\xi)<\delta,$ for all $\xi<\delta.$ Since by Lemma~\ref{prop_fixing_A} $j(A)=A,$ we must have $j(f)=f.$ But clearly, $f\geq^* k\vert_{\delta}$ for all $k\in A,$ and in particular, $f\geq^* j\vert_{\delta},$ contradicting Theorem~\ref{no_dominating_function}.
\end{proof}

Thus $\alpha$ behaves somewhat similar to extendible cardinals inside $V_\delta.$ Such behaviour in a more global form under $\ZF$ alone is already considered in Goldberg \cite{goldberg_measurable_choiceless}, Asper\'o \cite{aspero}, and Mohammd \cite{marwan}

We can impose even further restrictions on the elementary embeddings $k$ above while still getting the same result:

\begin{theorem}\label{jumping_roots}
    For all regular cardinals $\delta>\lambda$ such that $j(\delta)=\delta,$ there exists $\alpha<\delta$ such that, for every $\beta\in (\alpha,\delta),$ there is a $k\in R(j\vert_{V_\delta})$ satisfying $k(\alpha)>\beta.$
\end{theorem}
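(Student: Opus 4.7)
The plan is to mimic the proof of Theorem~\ref{jumping_embeddings}, simply replacing $A(j\vert_{V_\delta})$ with $R(j\vert_{V_\delta})$, and then navigate around the obstruction that this substitution creates. I will assume for contradiction that no such $\alpha$ exists, and define $f : \delta \to \delta$ by $f(\xi) = \sup\{k(\xi) \mid k \in R(j\vert_{V_\delta})\}$; the failure of the conclusion, together with the regularity of $\delta$, will ensure that $f$ really does take values in $\delta$.

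The main obstacle is that Lemma~\ref{prop_fixing_A} has no analogue for the operation $R$: by elementarity one computes
\[
j(R(j\vert_{V_\delta})) = R(j(j\vert_{V_\delta})) = R(j\vert_{V_\delta}[j\vert_{V_\delta}]),
\]
which is in general a genuinely different set of embeddings. So, unlike in Theorem~\ref{jumping_embeddings}, I cannot conclude $j(f) = f$, and $f$ need not lie in the range of $j$.

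The workaround is to observe that we do not actually need $f$ itself to be in the range of $j$; it suffices to work with $j(f)$, which is automatically in the range of $j$. By elementarity together with $j(\delta) = \delta$, $j(f) : \delta \to \delta$ is the function $\xi \mapsto \sup\{k(\xi) \mid k \in R(j\vert_{V_\delta}[j\vert_{V_\delta}])\}$. The crucial (and essentially tautological) observation is that $j\vert_{V_\delta}$ is itself a root of its own square, i.e.\ $j\vert_{V_\delta} \in R(j\vert_{V_\delta}[j\vert_{V_\delta}])$. This immediately gives $j(f)(\xi) \geq j\vert_{V_\delta}(\xi) = j(\xi)$ for every $\xi < \delta$, so $j\vert_\delta \leq^* j(f)$. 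Applying Theorem~\ref{no_dominating_function} with $g = j(f)$ then produces the desired contradiction.
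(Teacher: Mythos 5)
Your proposal is correct and is essentially the paper's own proof: both define $f(\xi)=\sup\{k(\xi)\mid k\in R(j\vert_{V_\delta})\}$, pass to $j(f)$, and use that $j\vert_{V_\delta}\in j(R(j\vert_{V_\delta}))=R(j\vert_{V_\delta}[j\vert_{V_\delta}])$ to get $j\vert_\delta\leq^* j(f)$, contradicting Theorem~\ref{no_dominating_function}. Your explicit remark that $j(f)$ (rather than $f$) is the function to feed into that theorem is exactly the point the paper's proof relies on.
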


\begin{proof}
    First notice that $R(j\vert_{V_\delta})$ is not empty since $j(R(j\vert_{V_\delta})) = R(j(j\vert_{V_\delta})) = R(j\vert_{V_\delta}[j\vert_{V_\delta}])$ is not, as witnessed by $j\vert_{V_\delta}.$ This time define $f:\delta\rightarrow\OR$ by setting $f(\xi) = \sup\{k(\xi)\mid k\in R(j\vert_{V_\delta})\}.$ Again, if the theorem fails for $\delta,$ then $f(\xi)<\delta$ for all $\xi<\delta.$ Clearly $f\geq^* k\vert_{\delta}$ for all $k\in R(j\vert_{V_\delta}).$ By elementarity, $j(f)\geq^* k\vert_{\delta}$ for all $k\in j(R(j\vert_{V_\delta})) = R(j(j\vert_{V_\delta})) = R(j\vert_{V_\delta}[j\vert_{V_\delta}]).$ In particular, $j(f)\geq^* j\vert_{\delta},$ contradicting Theorem~\ref{no_dominating_function}.
\end{proof}

The above theorem can be proven for any set $X\subset \mathcal{E}_\delta,$ satisfying $j\vert_{V_\delta}\in j(X),$ in place of $R(j\vert_{V_\delta}).$

\begin{definition}\label{def:small_sets}
    Given a regular cardinal $\delta>\lambda$ such that $j(\delta)=\delta$ and a set $X\subset \mathcal{E}_\delta,$ we say that $X$ is \emph{$(j,\delta)$-small} iff $j\vert_{V_\delta}\in j(X)$ and $\sup\{k(\xi)\mid k\in X\}<\delta,$ for all $\xi<\delta.$ 
\end{definition}

Thus, we have the following:

\begin{theorem}\label{no_small_sets}
    There is no $(j,\delta)$-small set for any regular cardinal $\delta>\lambda.$\qed
\end{theorem}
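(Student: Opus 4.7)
The plan is to generalize the argument from Theorem~\ref{jumping_roots} by replacing the specific set $R(j\vert_{V_\delta})$ with an arbitrary set $X \subset \mathcal{E}_\delta$ satisfying the hypothesis that $j\vert_{V_\delta} \in j(X)$. The remark immediately following Theorem~\ref{jumping_roots} already signals that this generalization is available, and Definition~\ref{def:small_sets} is constructed precisely so that the second clause, $\sup\{k(\xi)\mid k\in X\}<\delta$ for all $\xi<\delta$, is the negation of the conclusion obtained in the proof of Theorem~\ref{jumping_roots}.

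Concretely, I would argue by contradiction: suppose some $X \subset \mathcal{E}_\delta$ is $(j,\delta)$-small for a regular cardinal $\delta > \lambda$ with $j(\delta) = \delta$ (the definition of $(j,\delta)$-small presupposes this). Define $f : \delta \rightarrow \delta$ by $f(\xi) = \sup\{k(\xi)\mid k \in X\}$; this is well-defined, landing in $\delta$, precisely because of the smallness condition. By construction $k\vert_\delta \leq f$ pointwise, hence $k\vert_\delta \leq^* f$, for every $k \in X$. Now apply $j$: by elementarity, $j(f) : \delta \rightarrow \delta$ satisfies $k\vert_\delta \leq^* j(f)$ for every $k \in j(X)$. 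The hypothesis $j\vert_{V_\delta} \in j(X)$ then yields $j\vert_\delta \leq^* j(f)$, and since $j(f)$ is visibly in the range of $j$, this contradicts Theorem~\ref{no_dominating_function}.

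There is really no serious obstacle here; the argument is essentially a repackaging of the proofs of Theorems~\ref{jumping_embeddings} and \ref{jumping_roots}, with the two clauses in the definition of $(j,\delta)$-smallness playing exactly the roles of the two facts that were verified by hand in those proofs (namely, that applying $j$ to the candidate set of embeddings still contains $j\vert_{V_\delta}$, and that the pointwise supremum stays below $\delta$). The only minor point worth flagging is to confirm, for clarity of the write-up, that $f$ is genuinely a function from $\delta$ to $\delta$ so that $j(f)$ is meaningful and lies in the range of $j$ in the sense required by Theorem~\ref{no_dominating_function}; this is immediate from the smallness clause together with regularity of $\delta$, so no real work is needed.
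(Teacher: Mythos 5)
Your proposal is correct and is exactly the argument the paper intends: Theorem~\ref{no_small_sets} is stated with no separate proof precisely because it is the generalization of Theorem~\ref{jumping_roots} flagged in the remark preceding Definition~\ref{def:small_sets}, with the two clauses of $(j,\delta)$-smallness supplying the two facts ($j\vert_{V_\delta}\in j(X)$ and $f(\xi)<\delta$) that were checked by hand for $R(j\vert_{V_\delta})$. Your write-up, including the final appeal to Theorem~\ref{no_dominating_function} via $j(f)\in\range j,$ matches the paper's route step for step.
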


We will show in the last section that $\AC$ implies the existence of $(j,\delta)$-small sets for unboundedly many $\delta,$ which will give us the Kunen inconsistency.

\section{Regular Cardinals}\label{sec:regular_cardinals}

In the context of Choice, it is a basic set theoretic fact that every successor cardinal is regular. In the absence of $\AC,$ there is no guarantee that successor cardinals are regular. In fact, Moti Gitik has showed that it is consistent with $\ZF$ that there are no regular uncountable cardinals \cite{gitik}.

In $\ZF(j),$ we already know that the $\kappa_n$ are regular for all $n\in \omega.$ David Asper\'o asked whether there are regular cardinals above $\lambda,$ and Goldberg answered this question positively in \cite{goldberg_measurable_choiceless}. We will need a more detailed account of Goldberg's result, so let us start by recalling what is necessary from his paper.

\begin{definition}[\cite{goldberg_measurable_choiceless}]
    A cardinal $\eta$ is said to be \emph{$(\gamma,\nu, x)$-almost supercompact} for $\gamma<\eta<\nu$ and $x\in V_\nu$ iff there exists $\bar \nu<\eta$ and $\bar x \in V_{\bar \nu}$ for which there is an elementary embedding $k:V_{\bar\nu}\rightarrow V_\nu$ such that $k(\gamma)=\gamma$ and $k(\bar x)=x.$
    We say that $\eta$ is \emph{$<\!\mu$-almost supercompact} iff $\eta$ is $(\gamma,\nu, x)$-almost supercompact for all $\gamma<\eta<\nu<\mu$ and all $x\in V_\nu,$ and we simply say that $\eta$ is almost supercompact iff it is $<\!\mu$-almost supercompact for all $\mu>\eta.$
\end{definition}

\begin{figure}[htbp]
    \centering
    \begin{minipage}{0.49\textwidth}
        \centering
        \includegraphics{./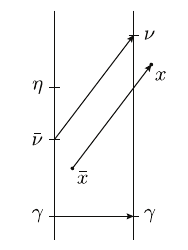}
        \captionof{figure}{Almost supercompactness}
        \label{fig_almost_supercompactness}
    \end{minipage}
    \begin{minipage}{0.49\textwidth}
        \centering
        \includegraphics{./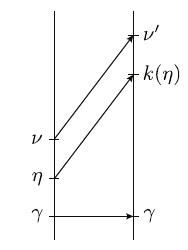}
        \captionof{figure}{Almost extendibility}
        \label{fig_almost_extendibility}
    \end{minipage}
\end{figure}

\begin{definition}[\cite{goldberg_measurable_choiceless}]
    A cardinal $\eta$ is said to be \emph{$(\gamma,\nu)$-almost extendible} for $\gamma<\eta<\nu$ iff there is an elementary embedding $k:V_\nu \rightarrow V_{\nu'}$ such that $k(\gamma)=\gamma$ and $k(\eta)>\nu.$
    We say that $\eta$ is \emph{$<\!\mu$-almost extendible} iff $\eta$ is $(\gamma,\nu)$-almost extendible for all $\gamma<\eta<\nu<\mu,$ and we simply say that $\eta$ is \emph{almost extendible} iff it is $<\!\mu$-almost extendible for all $\mu>\eta.$
\end{definition}

Notice that a cardinal can be almost supercompact simply by the virtue of being a limit of almost supercompact cardinals, and the same is true for almost extendible cardinals. Thus the two classes of almost supercompact cardinals and almost extendible cardinals are closed.

\begin{proposition}[\cite{goldberg_measurable_choiceless}]\label{cor_extendible_to_supercompact}
    If a cardinal $\eta$ is $<\!\mu$-almost extendible, where $\mu$ is a limit ordinal, then it is also $<\!\mu$-almost supercompact.\qed
\end{proposition}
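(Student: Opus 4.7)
The plan is to reflect an almost extendibility embedding downward to witness almost supercompactness. Fix $\gamma<\eta<\nu<\mu$ and $x\in V_\nu;$ we must exhibit $\bar\nu<\eta,$ $\bar x\in V_{\bar\nu},$ and an elementary embedding $h:V_{\bar\nu}\to V_\nu$ with $h(\gamma)=\gamma$ and $h(\bar x)=x.$ Since $\mu$ is a limit ordinal, pick $\nu^*$ with $\nu<\nu^*<\mu.$ Applying $<\!\mu$-almost extendibility of $\eta$ to the pair $(\gamma,\nu^*)$ yields an elementary embedding $k:V_{\nu^*}\to V_{\nu'}$ with $k(\gamma)=\gamma$ and $k(\eta)>\nu^*.$

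The key observation is that, because $\nu<\nu^*,$ the restriction $k\vert_{V_\nu}$ is itself an elementary embedding of $V_\nu$ into $V_{k(\nu)}$ that fixes $\gamma$ and sends $x$ to $k(x);$ moreover $\nu<k(\eta).$ Let $\phi(a,b,c,d)$ be the first-order statement asserting that there exist $\bar\nu<a,$ $\bar x\in V_{\bar\nu},$ and an elementary embedding $h:V_{\bar\nu}\to V_b$ with $h(d)=d$ and $h(\bar x)=c.$ The witnesses $\bar\nu=\nu,$ $\bar x=x,$ $h=k\vert_{V_\nu}$ show that $V_{\nu'}\satisfies\phi(k(\eta),k(\nu),k(x),\gamma).$ Pulling back along $k$ gives $V_{\nu^*}\satisfies\phi(\eta,\nu,x,\gamma),$ and this is exactly a witness to $(\gamma,\nu,x)$-almost supercompactness of $\eta.$

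I expect the main technical point to be verifying that $k\vert_{V_\nu}$ is genuinely elementary as a map into $V_{k(\nu)},$ and that $\phi$ really is first-order expressible inside $V_{\nu'}.$ Both are standard: elementary embeddings between bounded ranks $V_\alpha$ are ordinary sets in the ambient $V_{\nu'},$ and the elementarity of the restriction follows by induction on formula complexity, using that $k$ is fully elementary on $V_{\nu^*}$ together with the fact that $k(V_\nu)=V_{k(\nu)}.$ The essential content of the argument is the reflection trick combined with the ``spare room'' afforded by $\mu$ being a limit, which is what allows us to pick $\nu^*$ strictly above $\nu$ while staying below $\mu;$ this is the only place the limit hypothesis on $\mu$ is used.
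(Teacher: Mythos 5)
The paper does not prove this proposition; it is quoted from Goldberg with a \verb|\qed|, so there is no in-paper argument to compare against. Your reflection argument is correct and is the standard (Magidor-style) way to derive almost supercompactness from almost extendibility: restrict the extendibility embedding $k:V_{\nu^*}\to V_{\nu'}$ to $V_\nu$, observe that $k\vert_{V_\nu}:V_\nu\to V_{k(\nu)}$ is an elementary embedding lying in $V_{\nu'}$ that fixes $\gamma$ and hits $k(x)$, note $\nu<\nu^*<k(\eta)$, and pull the resulting existential statement back along $k$. The limit hypothesis on $\mu$ is indeed used exactly where you say, to get $\nu^*$ strictly between $\nu$ and $\mu$; the only point worth being slightly more careful about is to take $\nu^*$ a few successor steps above $\nu$ (e.g.\ $\nu^*\geq\nu+5$, which is available since $\mu$ is a limit), so that $V_{k(\nu)}$, the set $k\vert_{V_\nu}$, and the satisfaction relations needed to express ``$h$ is elementary'' are genuinely elements of $V_{\nu'}$ (respectively $V_{\nu^*}$ after pulling back); with that adjustment the absoluteness of satisfaction for set structures finishes the argument as you describe.
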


\begin{proposition}[\cite{goldberg_measurable_choiceless}]\label{prop_reinhardt_to_club_extendibles}
    If there is a Reinhardt cardinal, then there is a club proper class of almost extendible cardinals.\qed
\end{proposition}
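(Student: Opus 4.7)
The plan is to exhibit a closed unbounded proper class $C$ of cardinals and show that every $\eta\in C$ is almost extendible. The natural candidates are cardinals $\eta$ that serve as critical points of embeddings $l:V_\delta\rightarrow V_\delta$ derivable from $j,$ with $l(\eta)$ controllable to be arbitrarily large.

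First I would set up a reduction. Suppose $\eta$ is a cardinal such that, for every $\gamma<\eta$ and $\nu>\eta,$ one can find $l\in\mathcal{E}_\delta$ for some $\delta>\nu$ with $\gamma<\crit l\leq\eta$ and $l(\eta)>\nu.$ Then $l\vert_{V_\nu}:V_\nu\rightarrow V_{l(\nu)}$ is an elementary embedding fixing $\gamma$ (since $\crit l>\gamma$) and sending $\eta$ above $\nu,$ so it witnesses $(\gamma,\nu)$-almost extendibility. Hence $\eta$ is almost extendible, and the task reduces to producing a club proper class of such $\eta.$

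Second, I would invoke Theorem~\ref{jumping_embeddings}: for each regular $\delta>\lambda$ with $j(\delta)=\delta,$ there is $\alpha_\delta<\delta$ such that for every $\beta\in(\alpha_\delta,\delta)$ there is $k\in A(j\vert_{V_\delta})$ with $k(\alpha_\delta)>\beta.$ For any such $k$ one has $\crit k\leq \alpha_\delta.$ A careful analysis of the critical points realised by elements of $A(j\vert_{V_\delta})$---which by Lemma~\ref{prop_application_op_and_critical_point} are generated algebraically from the critical sequence of $j$ via squaring and root operations---should exhibit enough variation to ensure that, for each $\gamma<\alpha_\delta,$ some witnessing $k$ also satisfies $\crit k>\gamma.$ These $\alpha_\delta,$ varying with $\delta,$ would then yield unboundedly many almost extendibles. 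Closure is a routine interception argument: if $\{\eta_\xi\}$ is an increasing sequence of almost extendibles with supremum $\eta,$ then for $\gamma<\eta<\nu$ pick $\xi$ with $\eta_\xi>\gamma,$ use the almost extendibility of $\eta_\xi$ to get $k:V_\nu\rightarrow V_{\nu'}$ with $k(\gamma)=\gamma$ and $k(\eta_\xi)>\nu,$ and observe $k(\eta)\geq k(\eta_\xi)>\nu.$

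The main obstacle is the second step: realising enough critical points to cover every $\gamma<\eta$ simultaneously with some $k$ satisfying $k(\eta)>\nu.$ The squaring and root operations generate a complicated algebra of embeddings in $A(j\vert_{V_\delta}),$ and reading off exactly which ordinals appear as critical points with prescribed images requires a detailed combinatorial analysis using the algebraic identities of Lemma~\ref{application_on_range} together with elementarity. An alternative approach, further from the machinery developed here but possibly cleaner, would be to construct the required witnesses via extenders or derived measures associated with $j,$ bypassing the combinatorics of $A(j\vert_{V_\delta})$ altogether.
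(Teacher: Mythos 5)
This proposition is imported from Goldberg's paper and functions here as an \emph{input}: together with Propositions~\ref{cor_extendible_to_supercompact} and~\ref{cor_goldberg_regular} it is what guarantees the existence of regular cardinals above $\lambda$ in $\ZF(j)$ in the first place. Your plan to derive it from Theorem~\ref{jumping_embeddings} is therefore circular: that theorem quantifies over regular cardinals $\delta>\lambda$ with $j(\delta)=\delta$, and before Proposition~\ref{prop_reinhardt_to_club_extendibles} is available there is no reason to believe any such $\delta$ exists (Gitik's model shows $\ZF$ alone tolerates a universe with no uncountable regular cardinals), so the theorem you invoke is potentially vacuous at this stage of the development.

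The more substantive gap is in your reduction. You ask for witnesses $l$ with $\gamma<\crit l$, which is strictly stronger than what almost extendibility demands (only $l(\gamma)=\gamma$), and it is unattainable from $A(j\vert_{V_\delta})$: by Lemma~\ref{prop_application_op_and_critical_point} the iterates $j_n\vert_{V_\delta}$ have critical points $\kappa_{n-1}<\lambda$, and any root of $m$ has critical point strictly below $\crit m$ (since $\crit m=l(\crit l)>\crit l$), so by induction every member of $A(j\vert_{V_\delta})$ has critical point below $\lambda$. Hence for $\gamma\geq\lambda$ --- which you must handle if the resulting class is to be a proper class --- the ``enough variation'' of critical points you hope for simply does not exist, and demanding embeddings of rank segments with arbitrarily large critical points amounts to asking for genuine extendibility rather than almost extendibility. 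The correct device, visible in the paper's own Lemma~\ref{iterate_to_fix} and Theorem~\ref{thm_roots_fixing_gamma}, is to require only that the witness \emph{fix} $\gamma$: some iterate $j_n$ fixes any prescribed $\gamma$ even though its critical point stays below $\lambda$. Finally, even granting your second step, a single $\alpha_\delta$ per $\delta$ does not yield a club proper class; compare Corollary~\ref{cor_local_almost_extendibles}, where one must vary $\gamma$, form the supremum function $F$, and pass to its fixed points to obtain closure and unboundedness.
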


Goldberg proves that if $\eta$ is almost supercompact, then every successor cardinal greater than $\eta$ has cofinality at least $\eta$ \cite[Corollary~2.18]{goldberg_measurable_choiceless}. This, in turn, implies the following proposition, which along with Propositions \ref{cor_extendible_to_supercompact} and \ref{prop_reinhardt_to_club_extendibles} give a proper class of regular cardinals in $\ZF(j).$

\begin{proposition}[\cite{goldberg_measurable_choiceless}]\label{cor_goldberg_regular}
    If $\eta$ is almost supercompact, then either $\eta$ or $\eta^+$ is a regular cardinal.\qed
\end{proposition}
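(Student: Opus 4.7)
The plan is to invoke the cofinality estimate of Goldberg cited immediately above the statement: if $\eta$ is almost supercompact, then every successor cardinal greater than $\eta$ has cofinality at least $\eta,$ so in particular $\cof(\eta^+)\geq\eta.$ Combining this with the $\ZF$-provable fact that the cofinality of any ordinal is itself a regular cardinal leaves essentially no room for $\eta^+$ to be singular once $\eta$ is singular.

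Concretely, I would proceed as follows. If $\eta$ is already regular there is nothing to prove, so assume $\eta$ is singular, i.e., $\cof(\eta)<\eta;$ the task is then to show that $\eta^+$ is regular. From the cited result I have $\cof(\eta^+)\geq\eta.$ Since $\cof(\eta^+)$ is a regular cardinal while $\eta$ is singular, the equality $\cof(\eta^+)=\eta$ is impossible, and hence $\cof(\eta^+)>\eta.$ But $\eta^+$ is by definition the least cardinal strictly above $\eta,$ so any cardinal strictly above $\eta$ is already at least $\eta^+.$ Therefore $\cof(\eta^+)\geq\eta^+,$ which together with the trivial bound $\cof(\eta^+)\leq\eta^+$ forces $\cof(\eta^+)=\eta^+.$ So $\eta^+$ is regular, establishing the dichotomy.

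There is essentially no real obstacle once the Goldberg cofinality lemma is in hand: the whole proof reduces to a short bookkeeping argument about cofinalities being regular cardinals and about $\eta^+$ being the immediate successor of $\eta$ in the cardinal ordering. All of the substantive content lives in the bound $\cof(\eta^+)\geq\eta$ for almost supercompact $\eta,$ which the excerpt explicitly cites from Goldberg's paper and which I am treating as a black box here.
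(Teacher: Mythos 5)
Your argument is correct and follows exactly the route the paper indicates: the proposition is cited from Goldberg with the remark that it follows from his Corollary~2.18 ($\cof(\eta^+)\geq\eta$ for almost supercompact $\eta$), and your bookkeeping step --- that $\cof(\eta^+)$ is a regular cardinal (a $\ZF$ fact), hence cannot equal a singular $\eta$, hence must be $\geq\eta^+$ --- is the intended derivation. No gaps.
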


Let us call an almost supercompact cardinal that is not a limit of almost supercompact cardinals a \emph{successor almost supercompact}. We will prove that if $\eta$ is a successor almost supercompact cardinal above $\lambda,$ then it cannot be regular. Thus, by Goldberg's result, we must have $\eta^+$ regular for all $\eta>\lambda$ that is a successor almost supercompact cardinal. We will need a few intermediary results first. The following lemma is easy.

\begin{lemma}\label{gluing_almost_supercompacts}
    If $\eta_0$ is $<\!\eta_1$-almost supercompact and $\eta_1$ is almost supercompact, then $\eta_0$ is also almost supercompact.
\end{lemma}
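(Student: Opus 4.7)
The plan is to show that $\eta_0$ is $(\gamma,\nu,x)$-almost supercompact for every $\gamma<\eta_0<\nu$ and $x\in V_\nu$. The case $\nu<\eta_1$ is immediate from the hypothesis on $\eta_0$, so the essential case is $\nu\geq\eta_1$, where I will perform a two-stage reduction: first use almost supercompactness of $\eta_1$ to pull the data down to a rank below $\eta_1$, and then use $<\!\eta_1$-almost supercompactness of $\eta_0$ to push further down below $\eta_0$.

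For the first stage, fix any $\nu^*>\nu$ with $(\nu,x)\in V_{\nu^*}$ and apply the $(\gamma,\nu^*,(\nu,x))$-almost supercompactness of $\eta_1$. This yields $\bar\nu_1<\eta_1$, a pair $(\bar\nu,\bar x)\in V_{\bar\nu_1}$, and an elementary embedding $k_1\colon V_{\bar\nu_1}\to V_{\nu^*}$ with $k_1(\gamma)=\gamma$, $k_1(\bar\nu)=\nu$, and $k_1(\bar x)=x$. By elementarity, $\bar x\in V_{\bar\nu}$ and $\gamma<\bar\nu$, and the restriction $k_1':=k_1\vert_{V_{\bar\nu}}\colon V_{\bar\nu}\to V_\nu$ is elementary (since the satisfaction relation of $V_{\bar\nu}$ is definable from $\bar\nu$ in $V_{\bar\nu_1}$, and correspondingly for $V_\nu$ in $V_{\nu^*}$). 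Thus $k_1'(\gamma)=\gamma$, $k_1'(\bar x)=x$, and $\bar\nu<\eta_1$.

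For the second stage, if $\bar\nu<\eta_0$, then $k_1'$ itself witnesses the desired almost supercompactness. Otherwise $\bar\nu>\eta_0$, and I apply $<\!\eta_1$-almost supercompactness of $\eta_0$ at $(\gamma,\bar\nu,\bar x)$ to obtain $\bar\nu_2<\eta_0$, some $\bar y\in V_{\bar\nu_2}$, and an elementary embedding $k_2\colon V_{\bar\nu_2}\to V_{\bar\nu}$ with $k_2(\gamma)=\gamma$ and $k_2(\bar y)=\bar x$. The composition $k_1'\circ k_2\colon V_{\bar\nu_2}\to V_\nu$ is then elementary and satisfies $(k_1'\circ k_2)(\gamma)=\gamma$ and $(k_1'\circ k_2)(\bar y)=x$, which completes the proof in this case.

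The main obstacle I expect is the boundary case $\bar\nu=\eta_0$, where the $<\!\eta_1$-almost supercompactness of $\eta_0$ does not apply at $(\gamma,\bar\nu,\bar x)$ because the definition demands $\bar\nu>\eta_0$ strictly. I would handle this by coding $V_{\eta_0}$ as an additional parameter: apply $<\!\eta_1$-almost supercompactness of $\eta_0$ at $(\gamma,\eta_0+1,(V_{\eta_0},\bar x))$, extract the preimage $\bar V$ of $V_{\eta_0}$ (which is $V_{\bar\beta}$ for some $\bar\beta<\eta_0$ by elementarity), restrict the resulting embedding to $V_{\bar\beta}$ to obtain an elementary embedding into $V_{\eta_0}$, and then compose with $k_1'$ to land in $V_\nu$.
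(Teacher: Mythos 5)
Your proof is correct and follows essentially the same route as the paper's: compose an embedding witnessing almost supercompactness of $\eta_1$ (pulling $\gamma,\nu,x$ down below $\eta_1$) with one witnessing $<\!\eta_1$-almost supercompactness of $\eta_0$. The only real divergence is how the requirement $\eta_0<\bar\nu$ for the second application is secured: the paper codes the pair $\langle\gamma,\eta_0\rangle$ into a single ordinal $\gamma'<\eta_1$ via the canonical well-ordering of $\OR\times\OR$ and demands that the first embedding fix $\gamma'$, so that it fixes $\eta_0$ outright and elementarity gives $\eta_0<\bar\nu$ at once, avoiding your trichotomy on the position of $\bar\nu$ and in particular the boundary case $\bar\nu=\eta_0$ with its extra coding patch. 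Your patch does work, modulo the pedantic point that the pair $(V_{\eta_0},\bar x)$ has rank $\eta_0+2$ and hence lies in $V_{\eta_0+3}$ rather than $V_{\eta_0+1}$, so you should take something like $\eta_0+\omega$ as the target rank there.
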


\begin{proof}
    Fix $\gamma<\eta_0,$ some $\nu\geq\eta_1,$ and $x\in V_{\nu}.$ Let $\gamma'$ code the pair $\langle\gamma,\eta_0\rangle$ in the canonical well-ordering of $\OR\times \OR.$ Notice that $\gamma'<\eta_0<\eta_1,$ so by almost supercompactness of $\eta_1,$ we can find an elementary embedding $k:V_{\bar\nu+\omega}\rightarrow V_{\nu+\omega}$ such that $\bar\nu+\omega<\eta_1,\ k(\gamma')=\gamma',\ k(\langle\bar\gamma,\bar\eta_0\rangle)=\langle\gamma,\eta_0\rangle,$ and $k(\bar x)=x,$ for some $\langle\bar\gamma,\bar\eta_0\rangle,\bar x\in V_{\bar\nu}.$ Since the canonical well-ordering of $\OR\times\OR$ is $\Delta_0,$ elementarity of $k$ and the fact that $k(\gamma')=\gamma'$ imply that $\langle\bar\gamma,\bar\eta_0\rangle = \langle\gamma,\eta_0\rangle.$ Hence, both $\gamma$ and $\eta_0$ are fixed by $k.$ Since $\eta_0<\eta_1\leq\nu,$ we must also have $\eta_0<\bar\nu$ by elementarity.

    We already know that $\bar\nu<\eta_1,$ so we can now use $<\!\eta_1$-almost supercompactness of $\eta_0$ to get another elementary embedding $l:V_{\dbar{\nu}}\rightarrow V_{\bar \nu}$ such that $\dbar{\nu}<\eta_0, \ l(\gamma)=\gamma,$ and $l(\dbar{x})=\bar x.$ Finally, the composite elementary embedding $k\circ l: V_{\dbar{\nu}}\rightarrow V_\nu$ fixes $\gamma$ and has $x$ in its range, and is therefore our witness.
\end{proof}

Definition~\ref{def_application_operation} and Lemma~\ref{prop_application_op_and_critical_point} also work for $\delta=\OR,$ i.e., for elementary embeddings $j:V\rightarrow V.$ Thus, let $j_n, n<\omega,$ be defined from $j$ as in Definition~\ref{def_iterations_and_roots}.

\begin{lemma}[{\cite[Lemma~1.3]{schlutzenberg}}]\label{iterate_to_fix}
    For all $\alpha,$ there exists $n$ such that $j_n(\alpha)=\alpha.$
\end{lemma}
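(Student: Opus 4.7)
The plan is to prove the lemma by induction on $\alpha,$ after establishing two preliminary facts. First, iterating Lemma~\ref{prop_application_op_and_critical_point} gives $\crit(j_n) = \kappa_{n-1},$ with critical sequence $\langle\kappa_{n-1},\kappa_n,\ldots\rangle$ and supremum $\lambda.$ Second, the fixed-point classes $F_n := \{\beta\in\OR : j_n(\beta) = \beta\}$ are nested: if $j_n(\beta) = \beta,$ then Lemma~\ref{application_on_range} gives
\[
j_{n+1}(\beta) = j_n[j_n](\beta) = j_n[j_n](j_n(\beta)) = j_n(j_n(\beta)) = j_n(\beta) = \beta,
\]
so $F_n \subseteq F_{n+1}.$

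The base and successor cases of the induction are immediate, using the nestedness at the successor step. For $\alpha < \lambda,$ pick $n$ with $\kappa_{n-1} > \alpha,$ so that $\alpha$ lies below $\crit(j_n)$ and is fixed by $j_n.$ For $\alpha = \lambda,$ every $j_n$ fixes $\lambda$ as the supremum of its critical sequence. For a limit $\alpha > \lambda,$ I would use Cantor normal form in base $\lambda$: write $\alpha = \lambda^{\alpha_1} c_1 + \cdots + \lambda^{\alpha_k} c_k$ with each $c_i < \lambda.$ Because $\lambda$ is fixed by every $j_n,$ the map $j_n$ acts componentwise on this expression, so when all exponents $\alpha_i$ and coefficients $c_i$ are strictly below $\alpha,$ the induction hypothesis supplies fixing indices and nestedness yields a common maximum $N$ with $j_N(\alpha) = \alpha.$

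The main obstacle is the subcase in which $\alpha$ is a fixed point of the map $\xi \mapsto \lambda^\xi,$ so that $\alpha_1 = \alpha$ and the Cantor decomposition fails to reduce. Here I would leverage that $\bigcup_n (F_n \cap \alpha) = \alpha$ by the induction hypothesis along the nested chain $\{F_n\cap\alpha\}_n.$ It suffices to find $n$ with $F_n \cap \alpha$ cofinal in $\alpha$: one may then pick $m \geq n$ such that $\cof(\alpha) \neq \kappa_{m-1} = \crit(j_m)$ (possible since $\cof(\alpha)$ is a single cardinal and $\kappa_{m-1}$ takes distinct values for distinct $m$), and invoke continuity of $j_m$ at $\alpha$ to conclude $j_m(\alpha) = \alpha.$ If no such $n$ exists, the partial suprema $\alpha_n := \sup(F_n\cap\alpha)$ are all strictly below $\alpha$ yet $\sup_n \alpha_n = \alpha,$ forcing $\cof(\alpha) = \omega.$ This residual $\omega$-cofinal subcase is the most delicate: one resolves it by analysing the $\alpha_n$ themselves, each fixed by some $j_{m(n)}$ by the induction hypothesis (and belonging to $F_n$ whenever $\cof(\alpha_n) \neq \kappa_{n-1},$ by closure of $F_n$ under sups of that cofinality), and then deriving a contradiction from the interaction of the nested structure of $F_n$ with the cofinal $\omega$-sequence of $\alpha_n$'s, which must ultimately consolidate into a single $F_N$ giving $\alpha \in F_N.$
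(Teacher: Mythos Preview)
Your continuity assertion is the central gap: it is not true that $j_m$ is continuous at $\alpha$ whenever $\cof(\alpha)\neq\crit(j_m)$. The correct criterion is that $j_m$ is continuous at $\alpha$ iff $j_m(\cof(\alpha))=\cof(\alpha)$. For instance, if $\cof(\alpha)=\kappa_1$ then $\cof(\alpha)\neq\kappa_0=\crit(j_1)$, yet $j_1(\kappa_1)=\kappa_2$ and $j_1$ is discontinuous at $\alpha$. When $\cof(\alpha)<\alpha$ this is repairable via the induction hypothesis (find $m_0$ with $j_{m_0}(\cof(\alpha))=\cof(\alpha)$ and take $m\geq\max(n,m_0)$), but when $\alpha$ is itself a regular cardinal---which can certainly occur among fixed points of $\xi\mapsto\lambda^\xi$---you cannot invoke the induction hypothesis on $\cof(\alpha)=\alpha$, and nothing you wrote lets you pass from $j_m{}''\alpha\subseteq\alpha$ to $j_m(\alpha)=\alpha$. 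Your $\omega$-cofinal ``residual'' case is likewise unfinished: the promised contradiction from the $\alpha_n$'s ``consolidating into a single $F_N$'' is never produced, and since the fixing indices $m(n)$ may well be unbounded in $\omega$, no such consolidation is forced by the nestedness alone.

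The paper's argument bypasses all of this with a short elementarity trick that your approach does not supply. Working with $j'=j\vert_{V_\gamma}$ for a $j$-fixed limit $\gamma>\alpha$, one shows $j(j'_n)=j'_{n+1}$ for every $n$. Letting $A_n\subseteq\alpha$ be the set of $j'_n$-fixed points below the least counterexample $\alpha$, minimality gives $\alpha=\bigcup_n A_n$; applying $j$ yields $j(\alpha)=\bigcup_n j(A_n)$, and by elementarity $j(A_n)$ is exactly the set of $j'_{n+1}$-fixed points below $j(\alpha)$. Since $\alpha\in j(\alpha)$, some $j'_{m+1}$ already fixes $\alpha$---contradiction. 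The key move is that $j$ shifts the tower $(j'_n)_n$ by one index, which makes the covering $\bigcup_n A_n=\alpha$ reflect to catch $\alpha$ itself; this replaces your entire Cantor-normal-form case analysis in a single stroke.
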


\begin{proof}
Suppose this is not the case and fix $\alpha$ as the least counterexample. Let $\gamma>\alpha$ be a limit ordinal fixed by $j,$ and denote $j\vert_{V_\gamma}$ by $j'.$
First, consider the sequence $j(\langle j'_1,j'_2,j'_3\cdots\rangle)=\langle j(j'_1),j(j'_2),j(j'_3)\cdots\rangle.$ We know that $j(j'_1)=j'_2$ by definition, hence, by elementarity of $j,$ we must have $j(j'_2)=j'_2[j'_2]=j'_3, j(j'_3)=j'_3[j'_3]=j'_4,\ldots$ and so on. Thus, $j(j'_n)=j'_{n+1}$ for all $n\geq 1.$

Now, for each $n\geq 1,$ let $A_n\subset \alpha$ be the set of ordinals in $\alpha$ that are fixed by $j'_n.$ By minimality of $\alpha,$ it must be that $\alpha=\bigcup_{n}A_n.$ Now, $j(A_n)$ is the set of ordinals in $j(\alpha)$ that are fixed by $j(j'_n)=j'_{n+1},$ and $j(\alpha) = j(\bigcup_{n}A_n) = \bigcup_n j(A_n).$ Since, by our assumption, $\alpha\in j(\alpha),$ there must be some $m$ such that $\alpha\in j(A_m).$ But, this means that $\alpha$ is a fixed point of $j'_{m+1},$ contradicting the choice of $\alpha.$
\end{proof}

Using the lemma above and coding finite sequences of ordinals as single ordinals, we can always find some $n$ such that $j_n$ fixes any desired finite set of ordinals. Given an elementary embedding $k:V_\delta\rightarrow V_\delta$ and $\gamma<\delta,$ let $R^\gamma(k)=\{l\in R(k)\mid l(\gamma)=\gamma\}.$

\begin{theorem}\label{thm_roots_fixing_gamma}
    Let $\delta>\lambda$ be a regular cardinal, let $\gamma<\delta,$ and let $n$ be such that $j_n$ fixes both $\gamma$ and $\delta.$ Then there exists $\alpha<\delta$ such that for all $\beta>\alpha$ there is $k\in R^\gamma(j_n\vert_{V_\delta})$ such that $k(\alpha)>\beta.$
\end{theorem}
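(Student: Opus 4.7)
The plan is to emulate the proof of Theorem~\ref{jumping_roots}, replacing $j$ by $j_n$ and $R$ by $R^\gamma$ throughout. The key observation is that $j_n:V\to V$ is itself a nontrivial elementary embedding with critical point $\kappa_{n-1}<\lambda<\delta$, so the entire development of Section~\ref{sec:eventually_dominating_functions} transfers verbatim to $j_n$. In particular I obtain the following analog of Theorem~\ref{no_dominating_function}: no function $g:\delta\to\delta$ in the range of $j_n$ can eventually dominate $j_n|_{\delta}$.

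I would then suppose the theorem fails and define $f:\delta\to\OR$ by
\[f(\xi)=\sup\{k(\xi)\mid k\in R^\gamma(j_n|_{V_\delta})\}.\]
By the failure assumption, $f(\xi)<\delta$ for each $\xi<\delta$, and by construction $f\geq k|_{\delta}$ pointwise for every $k\in R^\gamma(j_n|_{V_\delta})$.

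The next step is to apply $j_n$. Using elementarity together with the hypotheses $j_n(\gamma)=\gamma$ and $j_n(\delta)=\delta$, and the identity $j_n(j_n|_{V_\delta})=j_n|_{V_\delta}[j_n|_{V_\delta}]$ (established exactly as in Lemma~\ref{prop_fixing_A}), one computes
\[j_n(R^\gamma(j_n|_{V_\delta}))=R^\gamma(j_n|_{V_\delta}[j_n|_{V_\delta}]).\]
This set contains $j_n|_{V_\delta}$ itself, since $j_n|_{V_\delta}$ is a root of its own square and fixes $\gamma$ by hypothesis on $j_n$. Applying $j_n$ to the pointwise inequality above and evaluating on this particular witness, elementarity yields $j_n(f)(\xi)\geq j_n(\xi)$ for every $\xi<\delta$, so $j_n(f)\geq^* j_n|_{\delta}$ with $j_n(f)\in\range(j_n)$, contradicting the analog of Theorem~\ref{no_dominating_function} for $j_n$.

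The one genuinely nontrivial point to check will be that the analog of Theorem~\ref{no_dominating_function} (and of Lemma~\ref{lemma_club_not_all_fixed}) really does hold for $j_n$: those arguments use only that $j$ is an elementary embedding $V\to V$ with some critical point $\mu$, that $\delta$ is regular with $\mu<\delta$ and $j(\delta)=\delta$, and that $j|_{\delta}$ has arbitrarily high fixed points below $\delta$ (witnessed by $j(\eta+\mu)>\eta+\mu$ for $\eta$ fixed by $j$). All these conditions hold for $j_n$ with $\mu=\kappa_{n-1}$ since $\delta>\lambda$ and $j_n(\delta)=\delta$, so the transfer is routine and the main work lies in the computation of $j_n(R^\gamma(j_n|_{V_\delta}))$ described above.
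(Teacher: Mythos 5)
Your proposal is correct and takes essentially the same route as the paper, whose entire proof of this theorem reads ``Similar to the proof of Theorem~\ref{jumping_roots} using $j_n$ in place of $j$.'' You in fact supply more detail than the paper does, correctly isolating the two points that need checking: that Theorem~\ref{no_dominating_function} (and Lemma~\ref{lemma_club_not_all_fixed}) transfer to $j_n$, whose critical point $\kappa_{n-1}$ is below $\lambda<\delta$, and that $j_n\vert_{V_\delta}$ belongs to $j_n(R^\gamma(j_n\vert_{V_\delta}))=R^\gamma(j_n\vert_{V_\delta}[j_n\vert_{V_\delta}])$ precisely because $j_n$ fixes $\gamma$.
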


\begin{proof}
    Similar to the proof of Theorem~\ref{jumping_roots} using $j_n$ in place of $j.$
\end{proof}

\begin{corollary}\label{cor_local_almost_extendibles}
    For any regular limit cardinal $\delta>\lambda,$ $V_\delta$ satisfies that there exists a club class of almost extendible cardinals.
\end{corollary}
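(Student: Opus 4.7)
The plan is, for each $\gamma<\delta,$ to invoke Theorem~\ref{thm_roots_fixing_gamma} to produce a threshold ordinal $\alpha_\gamma<\delta$ above which a ``jumping'' property holds, and then to take closure points of the resulting function $\gamma\mapsto\alpha_\gamma$ to manufacture almost extendible cardinals in $V_\delta.$ The skeleton is: unboundedness via closure points, plus the already-noted closure of the class of almost extendibles, yields a club.

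Fix $\gamma<\delta.$ Lemma~\ref{iterate_to_fix} and the remark after it supply some $n_\gamma\in\omega$ for which $j_{n_\gamma}$ fixes both $\gamma$ and $\delta,$ and Theorem~\ref{thm_roots_fixing_gamma} then yields the least $\alpha_\gamma<\delta$ such that for every $\beta\in(\alpha_\gamma,\delta)$ there exists $k\in R^\gamma(j_{n_\gamma}\vert_{V_\delta})$ with $k(\alpha_\gamma)>\beta;$ necessarily $\gamma<\alpha_\gamma,$ since any such $k$ fixes $\gamma$ but must send $\alpha_\gamma$ arbitrarily high below $\delta.$ The key monotonicity observation is that, since every $k\in R^\gamma(j_{n_\gamma}\vert_{V_\delta})$ is order-preserving, $k(\alpha')\geq k(\alpha_\gamma)$ whenever $\alpha'\geq\alpha_\gamma.$ Consequently, for every $\alpha'\in[\alpha_\gamma,\delta)$ and every $\nu\in(\alpha',\delta),$ applying the property above with $\beta=\nu$ produces $k\in R^\gamma(j_{n_\gamma}\vert_{V_\delta})$ with $k(\gamma)=\gamma$ and $k(\alpha')>\nu,$ whose restriction $k\vert_{V_\nu}\colon V_\nu\to V_{k(\nu)}$ lies in $V_\delta$ (because $k(\nu)<\delta$) and witnesses that $\alpha'$ is $(\gamma,\nu)$-almost extendible inside $V_\delta.$

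Now define $G\colon\delta\to\delta$ by $G(\gamma)=\alpha_\gamma.$ Regularity of $\delta$ makes $C=\{\eta<\delta:G(\gamma)<\eta\text{ for all }\gamma<\eta\}$ a club in $\delta.$ For every $\eta\in C$ and every $\gamma<\eta$ we have $\alpha_\gamma<\eta,$ so the previous paragraph (with $\alpha'=\eta$) tells us that $\eta$ is $(\gamma,\nu)$-almost extendible in $V_\delta$ for every $\nu\in(\eta,\delta);$ hence $\eta$ is almost extendible in $V_\delta.$ Since the class of almost extendibles is closed under limits (as already observed in the paper), the $V_\delta$-definable class of almost extendible cardinals is both unbounded and closed in $\delta,$ which is exactly the assertion that $V_\delta$ models the existence of a club class of almost extendible cardinals. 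The one potential obstacle — that Theorem~\ref{thm_roots_fixing_gamma} supplies only a single $\alpha_\gamma$ per $\gamma,$ rather than a uniform $\eta$ working for all $\gamma<\eta$ simultaneously — is dissolved by the monotonicity observation above, which propagates the jumping property from $\alpha_\gamma$ to every larger ordinal and thereby allows a single closure point $\eta$ to serve as witness across all smaller $\gamma.$
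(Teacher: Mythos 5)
Your proof is correct and follows essentially the same route as the paper's: extract the least $\alpha_\gamma$ from Theorem~\ref{thm_roots_fixing_gamma} for each $\gamma<\delta,$ pass to closure points of $\gamma\mapsto\alpha_\gamma$ (a club by regularity of $\delta$), and use order-preservation of the roots to see that each closure point is $(\gamma,\nu)$-almost extendible in $V_\delta$ for all $\gamma<\eta<\nu<\delta.$ The one detail the paper adds, and which you should too, is intersecting with the club of cardinals below $\delta,$ since almost extendibility is defined only for cardinals and a closure point of $\gamma\mapsto\alpha_\gamma$ need not be one.
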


\begin{proof}
    For each $\gamma<\delta,$ let $\alpha_\gamma<\delta$ be the least ordinal $\alpha$ given by Theorem~\ref{thm_roots_fixing_gamma}. Define $F:\delta\rightarrow\delta$ by setting $F(\xi)=\sup\{\alpha_\gamma\mid\gamma<\xi\}.$ This is welldefined by the regularity of $\delta.$ Notice that $F$ is an increasing and continuous function, so the set of fixed points of $F$ form a club $C\subset \delta.$ If $D\subset\delta$ is the club of all cardinals below $\delta,$ then clearly any member of $C\cap D$ is an almost extendible cardinal in $V_\delta.$
\end{proof}

We are now ready to prove the main theorem of this section.

\begin{theorem}\label{thm_proper_is_not_regular}
    If $\eta>\lambda$ is a successor almost supercompact cardinal, then it is not regular.
\end{theorem}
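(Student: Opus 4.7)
The plan is to argue by contradiction: assume $\eta>\lambda$ is a successor almost supercompact cardinal and suppose further that $\eta$ is regular. Before applying the corollaries of the previous section, I would check that $\eta$ is in fact a limit cardinal --- this is a standard consequence of almost supercompactness (given any $\mu<\eta$, reflect $V_{\eta+1}$ through an embedding $k:V_{\bar\nu}\to V_{\eta+1}$ fixing $\mu$; the image $k``V_{\bar\nu}$ with $\bar\nu<\eta$ already exhibits many cardinals between $\mu$ and $\eta$). Hence $\eta$ is a regular limit cardinal above $\lambda$.

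This puts us in position to apply Corollary~\ref{cor_local_almost_extendibles} with $\delta:=\eta$: inside $V_\eta$ there is a club class of almost extendible cardinals. Because $\eta$ is a limit ordinal, every elementary embedding $k:V_\nu\to V_{\nu'}$ with $\nu,\nu'<\eta$ is itself an element of $V_\eta$, so the defining formula for almost extendibility relativizes cleanly, and unboundedly many $\alpha<\eta$ are genuinely $<\!\eta$-almost extendible in $V$. For each such $\alpha$, Proposition~\ref{cor_extendible_to_supercompact} (applied with $\mu:=\eta$, which is a limit ordinal) gives that $\alpha$ is $<\!\eta$-almost supercompact. Lemma~\ref{gluing_almost_supercompacts} (applied with $\eta_0:=\alpha$ and $\eta_1:=\eta$) then upgrades this to full almost supercompactness of $\alpha$.

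Thus $\eta$ has unboundedly many almost supercompact cardinals below it, so $\eta$ is a limit of almost supercompact cardinals, contradicting the hypothesis that $\eta$ is a successor almost supercompact.

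The main obstacle I expect is the absoluteness step, namely transferring "$V_\eta\models\alpha$ is almost extendible" down to "$\alpha$ is $<\!\eta$-almost extendible in $V$"; it is routine but must be argued carefully, noting that the almost extendibility witnesses required by the definition (embeddings $k:V_\nu\to V_{\nu'}$ for $\gamma<\alpha<\nu<\eta$) live in $V_\eta$ precisely because $\eta$ is a limit ordinal, so that existence inside $V_\eta$ coincides with existence in $V$. A secondary concern is verifying $\eta$ is a limit cardinal to begin with, but as noted this follows directly from the defining reflection property of almost supercompactness.
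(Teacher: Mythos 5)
Your proposal is correct and follows essentially the same route as the paper: assume regularity, note $\eta$ is a limit cardinal, invoke Corollary~\ref{cor_local_almost_extendibles} to find almost extendible cardinals in $V_\eta$ cofinally below $\eta$, then upgrade via Proposition~\ref{cor_extendible_to_supercompact} and Lemma~\ref{gluing_almost_supercompacts} to contradict the successor hypothesis. The only cosmetic difference is that the paper fixes a single gap interval $(\alpha,\eta)$ and produces one offending $\beta$, whereas you conclude that $\eta$ is a limit of almost supercompact cardinals; both yield the same contradiction.
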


\begin{proof}
    Let $\alpha<\eta$ be such that there is no almost supercompact cardinal in the open interval $(\alpha,\eta)$ and assume towards a contradiction that $\eta$ is regular. It is easy to see that any almost supercompact cardinal is a limit cardinal, so Corollary~\ref{cor_local_almost_extendibles} applies to $\eta,$ and we can fix a cardinal $\beta\in (\alpha,\eta)$ that is almost extendible in $V_\eta.$ This means that $\beta$ is $<\! \eta$-almost extendible, and hence $<\!\eta$-almost supercompact by Proposition~\ref{cor_extendible_to_supercompact}. Now, $\beta$ must be almost supercompact by Proposition~\ref{gluing_almost_supercompacts}, a contradiction.
\end{proof}

By Goldberg's result, Proposition~\ref{cor_goldberg_regular}, we obtain the following:

\begin{corollary}\label{suc_of_proper_almost_sup_is_regular}
    If $\eta>\lambda$ is a successor almost supercompact cardinal, then $\eta^+$ is regular.
\end{corollary}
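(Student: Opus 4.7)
The plan is to obtain this corollary by a direct combination of the two preceding results: Theorem~\ref{thm_proper_is_not_regular} of this section, and Goldberg's dichotomy stated as Proposition~\ref{cor_goldberg_regular}. There is essentially no new content beyond chaining these together, so the proof should be a short one-liner rather than a substantive argument.

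Concretely, I would proceed as follows. First, assume $\eta > \lambda$ is a successor almost supercompact cardinal, i.e., almost supercompact but not a limit of almost supercompact cardinals. Since $\eta$ is in particular almost supercompact, Proposition~\ref{cor_goldberg_regular} applies, and we conclude that at least one of $\eta$ or $\eta^+$ is a regular cardinal. Second, because $\eta$ is a successor almost supercompact above $\lambda$, Theorem~\ref{thm_proper_is_not_regular} tells us that $\eta$ itself fails to be regular. Combining these two observations, the only remaining possibility is that $\eta^+$ is regular, which is the desired conclusion.

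I do not foresee any obstacle, since both ingredients are already established: Proposition~\ref{cor_goldberg_regular} is cited verbatim from Goldberg's paper, and Theorem~\ref{thm_proper_is_not_regular} was just proved using Corollary~\ref{cor_local_almost_extendibles} and Lemma~\ref{gluing_almost_supercompacts}. The only thing to be careful about is to state the logical structure cleanly so that the reader sees this is a pure disjunctive elimination, and to make sure the hypothesis ``$\eta > \lambda$'' is explicitly invoked (since that is precisely what licenses the use of Theorem~\ref{thm_proper_is_not_regular}, which in turn relies on the extendibility phenomena established only above $\lambda$).
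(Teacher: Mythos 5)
Your proposal is correct and is exactly the argument the paper intends: the corollary is stated immediately after the remark ``By Goldberg's result, Proposition~\ref{cor_goldberg_regular}, we obtain the following,'' i.e., it is precisely the disjunctive elimination of Proposition~\ref{cor_goldberg_regular} against Theorem~\ref{thm_proper_is_not_regular}. Nothing further is needed.
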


\section{Small Sets Under $\AC$}\label{sec:small_sets_under_ac}

In this section we present an alternative proof of the Kunen inconsistency. In particular, we will prove that $\AC$ implies the existence of small sets, which contradicts Theorem~\ref{no_small_sets}. The proof is based on the proof of Solovay's result, given in \cite[Theorem~20.8]{jech}, showing that the singular cardinal hypothesis holds above a strongly compact cardinal.

\begin{lemma}[$\AC$]
    There exists an ordinal $\theta$ such that for every singular almost supercompact $\eta>\theta,$ there exists an almost supercompact $\eta'<\eta$ and a collection $\{M_\alpha \subset \eta^+\mid \alpha< \eta^+\}$ such that $|M_\alpha|<\eta'$ and 
    $$
    [\eta^+]^\omega=\bigcup_{\alpha<\eta^+} [M_\alpha]^\omega.
    $$
\end{lemma}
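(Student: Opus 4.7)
The plan is to adapt the covering argument behind Solovay's proof of SCH above a strongly compact cardinal (Jech, Theorem~20.8), substituting the reflection embeddings supplied by almost supercompactness of $\eta'$ for the fine ultrafilter on $P_{\eta'}(\eta^+)$ available in that setting, and leaning on $\AC$ for the final cardinal arithmetic.

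To choose $\theta$ and $\eta'$: by Propositions~\ref{prop_reinhardt_to_club_extendibles} and~\ref{cor_extendible_to_supercompact} the almost supercompact cardinals form an unbounded class, so I would fix any almost supercompact cardinal above $\lambda$ as $\theta$ and set $\eta':=\theta$ for every singular almost supercompact $\eta>\theta$. Under $\AC$ such an $\eta'$ is easily seen to be a strong limit, so $\mu<\eta'$ implies $2^\mu<\eta'$.

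For each $s\in[\eta^+]^\omega$, I would apply $(\gamma,\eta^++\omega,x)$-almost supercompactness of $\eta'$ with $\gamma$ any fixed ordinal below $\eta'$ and $x\in V_{\eta^++\omega}$ coding the pair $\langle\eta^+,s\rangle$, obtaining $\bar\nu_s<\eta'$ and an elementary embedding $k_s\colon V_{\bar\nu_s}\to V_{\eta^++\omega}$ with both $\eta^+$ and $s$ in its range. Letting $\overline{\eta^+},\bar s\in V_{\bar\nu_s}$ denote their preimages, set $M(k_s):=\range(k_s)\cap\eta^+$, so $|M(k_s)|\leq|V_{\bar\nu_s}|<\eta'$. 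By elementarity $\bar s$ is a countable subset of $\overline{\eta^+}$, and since $\crit k_s>\omega$ while $|\bar s|=\omega$, the standard argument yields $s=k_s(\bar s)=\{k_s(y):y\in\bar s\}\subseteq M(k_s)$, whence $s\in[M(k_s)]^\omega$.

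Finally, I would enumerate the distinct sets $M(k_s)$ as $\{M_\alpha:\alpha<\eta^+\}$. Each is a subset of $\eta^+$ of cardinality $<\eta'$, and under $\AC$ the collection of all such subsets has cardinality $(\eta^+)^{<\eta'}$. Since $\eta'\leq\eta<\eta^+$ and $\eta'$ is a strong limit, $(\eta^+)^\mu=\max(\eta^+,2^\mu)=\eta^+$ for every $\mu<\eta'$, so $(\eta^+)^{<\eta'}=\eta^+$ and the enumeration succeeds. This cardinal-arithmetic count is the main obstacle and precisely the place where $\AC$ is indispensable; by Theorem~\ref{no_small_sets} it must, in $\ZFC(j)$, lead to a contradiction, which is the inconsistency extracted in the next corollary.
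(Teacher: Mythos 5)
Your covering construction is fine as far as it goes: for each $s\in[\eta^+]^\omega$ the reflected embedding $k_s$ does satisfy $s=k_s``\bar s\subseteq\range(k_s)\cap\eta^+$, and $|\range(k_s)\cap\eta^+|\leq|V_{\bar\nu_s}|<\eta'$ (using that almost supercompact cardinals are $\beth$-fixed points). The gap is entirely in the last step, and it is fatal. You need the family $\{M(k_s)\}$ to have at most $\eta^+$ distinct members, and your justification is the identity $(\eta^+)^\mu=\max(\eta^+,2^\mu)=\eta^+$ for $\mu<\eta'$. That identity is not a theorem of $\ZFC$ for singular $\eta$: already for $\mu=\omega$ and $\eta$ a singular strong limit of countable cofinality (exactly the case Theorem~\ref{thm:small_sets_under_ac} needs), one has $(\eta^+)^\omega=\eta^+\cdot\eta^\omega=\eta^+\cdot\eta^{\cof(\eta)}=\eta^+\cdot 2^\eta=2^\eta,$ and whether $2^\eta=\eta^+$ is precisely the instance of $\SCH$ that this lemma feeds into the next one ($|V_{\eta+1}|=\eta^+$). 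So your count assumes the very conclusion the section is built to extract; the argument is circular. Nothing in your setup bounds the number of distinct ranges $\range(k_s)\cap\eta^+$ by $\eta^+$, and without that bound the enumeration $\{M_\alpha\mid\alpha<\eta^+\}$ — which the next lemma uses essentially in the estimate $\sum_{\alpha<\eta^+}|[M_\alpha]^\omega|\leq\eta^+\cdot\eta$ — cannot be carried out.

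The paper avoids this by never counting small subsets of $\eta^+$ at all. It argues by contradiction, takes consecutive counterexamples $\eta'=\eta_\kappa<\eta=\eta_{\kappa+1}$, and uses the global embedding $j$ to form the $\kappa$-complete ultrafilter $D$ on $\eta^+$ given by $X\in D\iff\sup j``\eta^+\in j(X)$. The sets $M_\alpha$ are then defined directly for each $\alpha<\eta^+$ from chosen cofinal sets $A_\alpha\subseteq\alpha$ of size $<\eta'$ and a decomposition of $\eta^+$ into intervals $I_\zeta$ chosen so that every $j(I_\zeta)$ meets $B_{\sigma}$; the covering of an arbitrary $x\in[\eta^+]^\omega$ then comes from $\kappa$-completeness of $D$, not from a cardinality estimate. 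If you want to salvage your local-reflection approach you would have to replace the pointwise choice of $k_s$ by some uniform, $\eta^+$-indexed family, which is in effect what the ultrafilter $D$ accomplishes.
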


\begin{proof}
    Suppose otherwise and let $\langle \eta_\xi\mid \xi\in\OR\rangle$ be an increasing enumeration of singular almost supercompact cardinals above $\lambda$ for which this fails. By Proposition~\ref{cor_goldberg_regular}, $\eta_\xi^+$ is regular for every $\xi.$ Notice that
    $$
    \eta_\kappa< \eta_{\kappa+1} <\eta_{\kappa+1}^+<j(\eta_\kappa)=\eta_{j(\kappa)}<\sup j"\eta_{\kappa+1}^+<j(\eta_{\kappa+1}^+).
    $$
    Let $\eta'=\eta_\kappa, \eta=\eta_{\kappa+1},$ and $\sigma=\sup j"\eta_{\kappa+1}^+.$ Define the $\kappa$-complete ultrafilter $D$ on $\eta^+$ by setting $X\in D \iff \sigma\in j(X),$ for all $X\subset \eta^+.$ Since $\cof(\sigma)=\eta^+<j(\eta'),$ the set $E=\{\alpha<\eta^+\mid \cof(\alpha)<\eta'\}$ belongs to $D.$ 

    Using $\AC,$ for each $\alpha\in E,$ fix $A_\alpha\subset \alpha$ cofinal with $|A_\alpha|<\eta'.$ If $\alpha<\eta^+$ is not in $E,$ then set $A_\alpha=\emptyset.$ Let $\langle B_\alpha\mid\alpha< j(\eta^+)\rangle=j(\langle A_\alpha\mid \alpha<\eta^+\rangle).$ Since  $B_\sigma$ is cofinal in $\sigma=\sup j" \eta^+,$ for every $\mu<\eta^+,$ there is $\mu'\in(\mu,\eta^+)$ such that $[j(\mu),j(\mu'))\cap B_\sigma\neq \emptyset.$ Define the sequence $\langle \mu_\zeta\mid \zeta<\eta^+\rangle$ in $\eta^+$ recursively by setting $\mu_0=0,$ taking limits at limit stages, and at successor stages taking $\mu_{\zeta+1}<\eta^+$ to be such that $[j(\mu_\zeta),j(\mu_{\zeta+1}))\cap B_\sigma\neq\emptyset.$ For $\zeta<\eta^+,$ set $I_\zeta=[\mu_\zeta,\mu_{\zeta+1}).$ For each $\alpha<\eta^+,$ define
    $$
    M_\alpha=\{\zeta<\eta^+\mid I_\zeta\cap A_\alpha\neq \emptyset\}.
    $$
    Now, fix any $\zeta<\eta^+.$ By construction $j(I_\zeta)=[j(\mu_\zeta),j(\mu_{\zeta+1}))$ intersects $B_\sigma.$ Thus, the set of all $\alpha<\eta^+$ such that $\zeta\in M_\alpha$ belongs to $D.$

    We will show that $\{M_\alpha\mid \alpha<\eta^+\}$ and $\eta'$ witness the conclusion of the lemma for $\eta,$ thereby arriving at a contradiction. First of all, for each $\alpha,$ $|M_\alpha|\leq |A_\alpha|<\delta$ since the $I_\zeta$ are mutually disjoint.
    Next, fix $x\in[\eta^+]^\omega.$ For each $\zeta\in x,$ the set of $\alpha$ such that $\zeta\in M_\alpha$ is in $D.$ Therefore, by $\kappa$-completeness of $D,$ $x\subset M_\alpha$ for some $\alpha.$ Thus, $x\in [M_\alpha]^\omega,$ and we are done.
\end{proof}

\begin{lemma}[$\AC$]
    There exists an ordinal $\theta$ such that for every singular almost supercompact $\eta>\theta$ with countable cofinality we have $|V_{\eta+1}|=\eta^+.$
\end{lemma}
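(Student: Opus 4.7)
The plan is to follow Solovay's template: use the covering family from the previous lemma to compute $(\eta^+)^\omega$, combine with singular-cardinal arithmetic under a strong-limit assumption to get $2^\eta$, and convert to $|V_{\eta+1}|$. Let $\theta$ be as in the previous lemma, fix a singular almost supercompact $\eta>\theta$ with $\cof(\eta)=\omega$, and let $\eta'<\eta$ and $\{M_\alpha\mid\alpha<\eta^+\}$ be as supplied there, so $|M_\alpha|<\eta'$ and $[\eta^+]^\omega=\bigcup_{\alpha<\eta^+}[M_\alpha]^\omega$.

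First I would verify that, under $\AC$, $\eta$ is a strong limit cardinal. Given $\alpha<\eta$, apply almost supercompactness with $\gamma=\alpha$, $\nu=\eta+\omega$, and $x=\eta$ to obtain an elementary $k\colon V_{\bar\nu}\to V_{\eta+\omega}$ with $\bar\nu<\eta$, $k(\alpha)=\alpha$, and some $\bar x\in V_{\bar\nu}$ sent to $\eta$. Since $\eta\notin V_{\bar\nu}$, $k$ is non-identity, so it has a critical point $\mu$ with $\alpha<\mu<\bar\nu$. The usual derived ultrafilter $\{X\subset\mu\mid\mu\in k(X)\}$ is a $\mu$-complete non-principal ultrafilter on $\mu$ in $V$, making $\mu$ measurable and hence strongly inaccessible under $\AC$. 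Thus $\eta$ is a limit of inaccessibles; in particular $|V_\eta|=\eta$, so the target reduces to $2^\eta=\eta^+$.

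Next I would run the two cardinal-arithmetic estimates. Strong limit gives $|[M_\alpha]^\omega|\leq 2^{|M_\alpha|}<\eta$ (since $|M_\alpha|<\eta'<\eta$), so $|[\eta^+]^\omega|\leq\eta^+\cdot\eta=\eta^+$, and hence $(\eta^+)^\omega=\eta^+$. Writing $\eta=\sup_n\eta_n$ with $\eta_n<\eta$, the restriction map $f\mapsto\langle f\restriction\eta_n\mid n<\omega\rangle$ injects $2^\eta$ into $\prod_n 2^{\eta_n}$; using $\sup_n 2^{\eta_n}\leq\eta$ (by strong limit) this yields $2^\eta\leq\eta^\omega\leq(\eta^+)^\omega=\eta^+$. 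Combined with the trivial reverse inequality $\eta^+\leq 2^\eta$, we get $2^\eta=\eta^+$ and therefore $|V_{\eta+1}|=\eta^+$.

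The main obstacle I expect is the strong-limit step, which depends on extracting a measurable cardinal from an almost-supercompactness witness; once that and the covering family are in hand, the remainder is a routine Solovay-style calculation.
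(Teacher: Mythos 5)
Your overall computation is correct and follows essentially the same Solovay-style route as the paper: use the covering family to bound $\bigl|[\eta^+]^\omega\bigr|$ by $\eta^+,$ deduce $(\eta^+)^\omega=\eta^+,$ then $2^\eta=\eta^{\cof(\eta)}=\eta^\omega\leq(\eta^+)^\omega$ for a strong limit $\eta$ of countable cofinality, and finally $|V_{\eta+1}|=2^{|V_\eta|}=2^\eta.$ The paper simply invokes the facts that $|V_\eta|=\eta$ and that the relevant almost supercompact cardinals are strong limits (these are taken from Goldberg's paper), whereas you attempt to derive them; that added step is where the one genuine gap lies.

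In your strong-limit argument you obtain $k\colon V_{\bar\nu}\rightarrow V_{\eta+\omega}$ with $k(\alpha)=\alpha$ and $\eta$ in the range of $k,$ set $\mu=\crit k,$ and assert $\alpha<\mu<\bar\nu.$ The inequality $\alpha<\mu$ does not follow: $k(\alpha)=\alpha$ only says that $\alpha$ is a fixed point of $k,$ and elementary embeddings routinely have fixed points far above their critical points, so nothing rules out $\mu\leq\alpha.$ As written you obtain a single measurable cardinal somewhere below $\bar\nu,$ not inaccessibles cofinally in $\eta,$ so the conclusion that $\eta$ is a strong limit (and hence $|V_\eta|=\eta$) is not established. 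The repair is easy and avoids measurables entirely: given $\alpha<\eta,$ apply almost supercompactness with $\gamma=\alpha$ and $x=2^{|\alpha|}$ (the cardinal) to get $k\colon V_{\bar\nu}\rightarrow V_\nu$ with $\bar\nu<\eta,$ $k(\alpha)=\alpha,$ and $k(\bar x)=2^{|\alpha|}.$ Since $k$ is injective and fixes $\alpha,$ the preimage of $\alpha$ is $\alpha$ itself, and since $V_{\bar\nu}$ computes $\mathcal{P}(\alpha)$ and its cardinality correctly, elementarity gives $2^{|\alpha|}=|\bar x|\leq\bar x<\bar\nu<\eta.$ With that step repaired (or with a citation to Goldberg in its place), the remainder of your argument goes through.
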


\begin{proof}
    Fix $\theta$ as in the previous lemma. Fix $\eta>\theta,$ and let $\eta'<\eta$ and $\{M_\alpha\mid \alpha< \eta^+\}$ be again as in the previous lemma. Notice that $|V_\eta|=\eta.$ We now have
    \begin{multline*}
        |V_{\eta+1}|= 2^{|V_\eta|}=2^\eta=\eta^\omega\leq (\eta^+)^\omega =|[\eta^+]^\omega| = |\bigcup_{\alpha<\eta^+}[M_\alpha]^\omega|\\
        \leq \sum_{\alpha<\eta^+} |[M_\alpha]^\omega|=\eta^+\cdot \sup_{\alpha<\eta^+} |[M_\alpha]^\omega| \leq \eta^+\cdot \eta=\eta^+,
    \end{multline*}
    where the last inequality follows from the facts that $\eta'$ is a strong limit and that $|M_\alpha|<\eta'$ for all $\alpha<\eta^+.$
\end{proof}

\begin{theorem}[$\AC$]\label{thm:small_sets_under_ac}
    There exists an ordinal $\theta$ such that for every singular almost supercompact $\eta>\theta$ such that $\cof(\eta)=\omega$ and $j(\eta)=\eta,$ there exists a $(j,\eta^+)$-small set.
\end{theorem}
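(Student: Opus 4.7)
The plan is to construct, under $\AC$, an explicit $(j,\eta^+)$-small set $X \subset \mathcal{E}_{\eta^+}$ and then invoke Theorem~\ref{no_small_sets} for a contradiction. I would take $\theta$ to be the maximum of the two ordinals provided by the preceding two lemmas, so that for any $\eta > \theta$ meeting the hypotheses of the theorem we have both $|V_{\eta+1}| = \eta^+$ and a cover $\{M_\alpha : \alpha < \eta^+\}$ of $[\eta^+]^\omega$ with $|M_\alpha| < \eta'$ for some almost supercompact $\eta' < \eta$. By Corollary~\ref{suc_of_proper_almost_sup_is_regular}, $\eta^+$ is regular, so the notion of $(j,\eta^+)$-smallness is applicable.

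I would define $X$ as a family $\{k_\alpha : \alpha < \eta^+\}$, where each $k_\alpha \in \mathcal{E}_{\eta^+}$ is manufactured from $M_\alpha$ by an absolute recipe: the recipe should use $|V_{\eta+1}| = \eta^+$ to fit partial approximations of the desired embedding inside $V_{\eta+1}$, and almost supercompactness of $\eta'$ to extend these approximations to a full elementary embedding of $V_{\eta^+}$ into itself. Under $j$, the sequence $\langle k_\alpha : \alpha < \eta^+ \rangle$ then maps to a sequence $\langle k^*_\alpha : \alpha < j(\eta^+)\rangle$ obtained by the same recipe from $j(\langle M_\alpha : \alpha < \eta^+ \rangle)$. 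The critical index is $\sigma = \sup j"\eta^+$, the ordinal of cofinality $\eta^+$ that featured in the $\kappa$-complete ultrafilter $D$ of the first lemma; if the recipe is chosen carefully, $k^*_\sigma$ should coincide with $j|_{V_{\eta^+}}$, which places $j|_{V_{\eta^+}}$ in $j(X)$. Pointwise boundedness of $X$ would follow because, for fixed $\xi < \eta^+$, only those $\alpha$ whose $M_\alpha$ ``reaches'' $\xi$ in a precise sense contribute large values of $k_\alpha(\xi)$, and the cover property together with regularity of $\eta^+$ keeps the supremum strictly below $\eta^+$.

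The main obstacle is the coding step: arranging the assignment $M_\alpha \mapsto k_\alpha$ so that it is absolute enough under $j$ that $k^*_\sigma$ is precisely $j|_{V_{\eta^+}}$ rather than some spurious nearby embedding. This is the delicate combinatorial content inherited from Solovay's \textsc{SCH} argument in \cite[Theorem~20.8]{jech}: the countable cofinality of $\eta$ is what lets the action of each $k_\alpha$ on an ordinal be reconstructed from its action on a countable cofinal subset captured by some $M_\beta$, and $\kappa$-completeness of $D$ is what forces the diagonal ordinal $\sigma$ to pick up exactly $j|_{V_{\eta^+}}$. Once the coding is in place, the contradiction with Theorem~\ref{no_small_sets}, and hence Corollary~\ref{cor:kunen_inconsistency}, is immediate.
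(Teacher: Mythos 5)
There is a genuine gap here: the construction of the small set is never actually carried out, and the sketch you give points in the wrong direction. You propose to index $X$ as $\{k_\alpha \mid \alpha<\eta^+\}$ with each $k_\alpha$ ``manufactured'' from the covering set $M_\alpha$, and you defer the key step (the ``absolute recipe'' making $k^*_\sigma = j\vert_{V_{\eta^+}}$) as a delicate coding problem to be solved. But there is no reason such a recipe exists: the $M_\alpha$ are merely subsets of $\eta^+$ of size $<\eta'$ covering $[\eta^+]^\omega$, and nothing about them produces elementary embeddings of $V_{\eta^+}$ into itself. Worse, a family of $\eta^+$ many embeddings indexed by \emph{all} of $\eta^+$ cannot in general be pointwise bounded: for fixed $\xi$, the values $k_\alpha(\xi)$ range over $\eta^+$ many ordinals below the regular cardinal $\eta^+$, and regularity gives you nothing unless you can bound the number of \emph{distinct} values by something $<\eta^+$. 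Your appeal to ``only those $\alpha$ whose $M_\alpha$ reaches $\xi$'' does not supply such a bound. You have also misplaced the role of $\sigma=\sup j''\eta^+$ and the ultrafilter $D$: these live entirely inside the proof of the covering lemma and do not reappear in the construction of the small set.

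The covering family's only job is to establish the cardinal arithmetic $|V_{\eta+1}|=\eta^+$; after that it is discarded. The actual construction is much simpler. Fix a surjection $b:\eta^+\rightarrow\mathcal{E}_\eta$ (possible since $\mathcal{E}_\eta\subset V_{\eta+1}$ and $|V_{\eta+1}|=\eta^+$). Since $j(\eta)=\eta$, the restriction $j\vert_{V_\eta}$ lies in $\mathcal{E}_\eta=j(\mathcal{E}_\eta)$, so there is $\beta<\eta^+$ with $j\vert_{V_\eta}\in\range j(b\vert_\beta)$. Take $X=\{k\in\mathcal{E}_{\eta^+}\mid k\vert_{V_\eta}\in\range b\vert_\beta\}$. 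Then $j\vert_{V_{\eta^+}}\in j(X)$ by choice of $\beta$, and smallness follows because any $k\in\mathcal{E}_{\eta^+}$ is determined on $\eta^+$ by $k\vert_{V_\eta}$: one recovers $k\vert_{V_{\eta+1}}$ from $k(A)=\bigcup_{\alpha<\eta}k(A\cap V_\alpha)$, and then $k\vert_{\eta^+}$ since every ordinal in $(\eta,\eta^+)$ is coded by a wellordering of $\eta$. Hence $|\{k(\xi)\mid k\in X\}|\leq|\beta|<\eta^+$ for each $\xi<\eta^+$, which is exactly the bound your indexing scheme fails to provide.
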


\begin{proof}
    Fix $\theta$ as in the previous lemma and let $\eta>\theta$ be any singular almost supercompact such that $\cof(\eta)=\omega$ and $j(\eta)=\eta.$ By the previous lemma, we can fix a surjection $b:\eta^+\rightarrow \mathcal{E}_{\eta}\subset V_{\eta+1}.$ Let $\beta<\eta^+$ be such that $j\vert_{V_{\eta}}\in \range j(b\vert_{\beta}).$ Let $X\subset \mathcal{E}_{\eta^+}$ consist of all those $k$ such that $k\vert_{V_\eta}\in \range b\vert_{\beta}.$ We will show that $X$ is $(j,\eta^+)$-small.
    
    First, since $j\vert_{V_{\eta}}\in \range j(b\vert_{\beta}),$ we have $j\vert_{V_{\eta^+}}\in j(X).$ Next, we need to show that $\sup\{k(\xi)\mid k\in X\}<\eta^+,$ for all $\xi<\eta^+.$ Notice that, for $k,l\in\mathcal{E}_{\eta^+},$ $k\vert_{V_{\eta}}= l\vert_{V_{\eta}}$ implies $k\vert_{V_{\eta+1}}=l\vert_{V_{\eta+1}}.$ This is because $k(A)=\bigcup_{\alpha<\eta} k(A\cap V_\alpha),$ for all $A\subset V_\eta$ and all $k\in \mathcal{E}_{\eta^+}.$ Also, $k\vert_{V_{\eta+1}}=l\vert_{V_{\eta+1}}$ implies $k\vert_{\eta^+}=l\vert_{\eta^+},$ since each $\alpha\in(\eta,\eta^+)$ corresponds to some wellordering of $\eta.$ Therefore, for each $\xi<\eta^+,$
    \[
    |\{k(\xi)\mid k\in X\}| \leq |\{k\vert_{\eta^+}\mid k\in X\}| \leq |\{k\vert_{V_\eta}\mid k\in X\}| \leq |(b\vert_{\beta})|<\eta^+.
    \qedhere
    \]
\end{proof}

The Kunen inconsistency now follows as a corollary of the theorem above and Theorem~\ref{no_small_sets}:

\begin{corollary}[The Kunen inconsistency]\label{cor:kunen_inconsistency}
    The theory $\ZFC(j)$ is inconsistent.
\end{corollary}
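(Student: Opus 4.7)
The plan is to exhibit a single cardinal $\eta$ satisfying all the hypotheses of Theorem~\ref{thm:small_sets_under_ac}, together with the side conditions on $\eta^+$ needed to apply Theorem~\ref{no_small_sets}. The two theorems will then directly contradict each other, yielding the inconsistency.

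Let $\theta$ be as in Theorem~\ref{thm:small_sets_under_ac}. By Propositions~\ref{prop_reinhardt_to_club_extendibles} and \ref{cor_extendible_to_supercompact}, the almost supercompact cardinals form a club proper class, and the $j$-fixed ordinals similarly form a club proper class, so their intersection is unbounded in $\OR.$ I would then choose, by recursion, an increasing $\omega$-sequence $\eta_0<\eta_1<\cdots$ from this intersection, each above $\max(\theta,\lambda),$ and set $\eta=\sup_{n<\omega}\eta_n.$ Closure of the class of almost supercompacts makes $\eta$ almost supercompact; $\cof(\eta)=\omega$ by construction; and $j(\eta)=\eta$ because $\eta$ is the supremum of an $\omega$-sequence pointwise fixed by $j,$ a standard elementarity argument applied to the sequence $\langle\eta_n\mid n<\omega\rangle.$ For the side conditions on $\eta^+:$ since $\cof(\eta)=\omega<\eta,$ the cardinal $\eta$ is singular, so $\eta^+$ is regular by Proposition~\ref{cor_goldberg_regular}; clearly $\eta^+>\lambda;$ and under $\AC,$ $\eta^+$ is the least cardinal strictly above $\eta,$ hence definable from $\eta,$ so $j(\eta^+)=\eta^+.$

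At this point Theorem~\ref{thm:small_sets_under_ac} produces a $(j,\eta^+)$-small set, in direct contradiction to Theorem~\ref{no_small_sets}. The genuine work has already been carried out in the previous two sections; the only remaining step here is the construction of $\eta,$ and the mildly delicate part of that is merely the observation that the intersection of two club proper classes of ordinals is again unbounded, which is routine.
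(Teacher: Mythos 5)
Your proof is correct and takes essentially the same approach as the paper, which simply notes that the corollary follows from combining Theorem~\ref{thm:small_sets_under_ac} with Theorem~\ref{no_small_sets}. Your explicit construction of a suitable $\eta$ (as a supremum of an $\omega$-sequence of $j$-fixed almost supercompact cardinals above $\max(\theta,\lambda)$) and the verification that $\eta^+$ is regular, exceeds $\lambda,$ and is fixed by $j$ supply details the paper leaves implicit, and they check out.
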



\printbibliography

\end{document}